\documentclass[12pt]{amsart}
\usepackage[colorlinks=true,citecolor=magenta,linkcolor=magenta]{hyperref}
\usepackage{amssymb,verbatim,amsthm, amscd,amsmath,graphicx,mathtools}
\usepackage[capitalize]{cleveref}
\usepackage{enumerate}
\usepackage{graphicx}
\usepackage{caption}
\usepackage{subcaption}
\usepackage{pdfsync}
\usepackage{color,colortbl}
\usepackage{tikz-cd, tikz-3dplot}
\usepackage{fullpage}
\usepackage{bbm}
\usepackage{comment}

\setlength{\textwidth}{6.5in} \setlength{\textheight}{8.5 in}
\setlength{\oddsidemargin}{0.0 in}
\setlength{\evensidemargin}{\oddsidemargin}
\hfuzz2pt 
\vfuzz1.5pt

\numberwithin{equation}{section}
\hyphenation{semi-stable}

\newtheorem{theorem}{Theorem}[section]
\newtheorem{proposition}[theorem]{Proposition}
\theoremstyle{definition}
\newtheorem{definition}{Definition}[section]
\newtheorem{example}[theorem]{Example}
\newtheorem{remark}[theorem]{Remark}
\newtheorem{lemma}[theorem]{Lemma}
\newtheorem{corollary}[theorem]{Corollary}

\newcommand{\la}{\langle}
\newcommand{\ra}{\rangle}

\DeclareMathOperator{\soc}{socdeg}
\DeclareMathOperator{\hilb}{Hilb_{\A}}
\DeclareMathOperator{\rank}{rank}

\def\k{{\Bbbk}}
\def\D{{\Delta}}
\def\A{{A(\D)}}
\def\l{{\ell}}

\title{On the Lefschetz property for quotients by monomial ideals containing  squares of variables}
\author{Hailong Dao}  
\address{Department of Mathematics, University of Kansas, Lawrence, KS 66045-7523, USA}
\email{hdao@ku.edu}
\urladdr{https://www.math.ku.edu/~hdao/}
\author{Ritika Nair}
\address{Department of Mathematics, University of Kansas, Lawrence, KS 66045-7523, USA}
\email{rnair@ku.edu}
\thanks{2020 {\em Mathematics Subject Classification.} 05C25, 05E40, 13E10, 13F55, 13H10}
\thanks{{\em Key words and phrases.} Lefschetz properties, monomial ideal, Artinian algebra, Stanley-Reisner ring, simplicial complex, pseudomanifolds, triangulation, face $2$-colorable, Gr\"unbaum coloring, bipartite graph, Gorenstein ring}


\begin{document}

\maketitle

\begin{abstract}
    Let $\Delta$ be an (abstract) simplicial complex on $n$ vertices. One can define the Artinian monomial algebra $\A = \k[x_1, \ldots, x_n]/ \la x_1^2, \ldots, x_n^2, I_{\D} \ra$, where $\k$ is a field of characteristic $ 0$ and $I_\D$ is the Stanley-Reisner ideal associated to $\D$. In this paper, we aim to characterize the Weak Lefschetz Property (WLP) of $A(\Delta)$ in terms of the simplicial complex $\Delta$. We are able to completely analyze when  WLP holds in degree $1$, complementing work by Migliore, Nagel and Schenck in \cite{MNS2020}. We give a complete characterization of all $2$-dimensional pseudomanifolds $\D$ such that $\A$ satisfies WLP. We also construct Artinian Gorenstein algebras that fail WLP by combining our results and the standard technique of Nagata idealization. 
\end{abstract} 

\section{Introduction}

Let $A$ be an Artinian algebra over a field $\k$ of characteristic $0$. $A$ is said to satisfy the Weak Lefschetz Property ( WLP for short) if the map between graded pieces $A_i \to A_{i+1}$ of $A$ induced by multiplying by a general linear form has full rank for all $i$.
This condition mimics the property of the cohomology ring of a projective variety where the Lefschetz's hyperplane Theorem holds, hence the name.

The last few decades have seen tremendous growth in the study of WLP, due to its many fascinating connections to other areas of mathematics:  Dilworth number of posets, Schur-Weyl duality, the $g$-conjecture and its generalizations, syzygy bundles, Laplace equations and Togliatti systems, just to name a few.(See \cite{APP2021, BK2007, HMMNWW2013,IMS2022, MMO2013, MM2016, MMN2011, S1983}.)

Even in the case where $A$ is defined by a {\it monomial ideal}, the literature is quite extensive, see for instance \cite{AB2020, CN2016, HMMNWW2013, MPS2008, MMO2013, MM2016, MMN2011, MNS2020} and the references therein. In this work, we focus on a special class of Artinian monomial ideal. Let $J\subset S=  \k[x_1, \ldots, x_n]$ be a monomial ideal containing squares of all the variables. Clearly, $J= \la x_1^2, x_2^2, \ldots, x_n^2, I \ra$
where $I$ is a {\it square-free} monomial ideal. Thus, we can write $I$ as the Stanley-Reisner ideal of a simplicial complex $\D$. Because of this correspondence, we shall write $J_{\D}$ for $J$ and $\A$ for the Artinian quotient $S/J_{\D}$.

Information about $I$ and $J_{\D}$ such as Hilbert functions, Betti numbers, type, regularity, can then be conveniently studied via combinatorial and topological properties of $\Delta$, see \cite[Theorem 2.1]{MPS2008}, \cite[Lemma 2.1]{MNS2020} and \cref{inv_A} below.  We note also that the polarization of $J_{\D}$ has been utilized recently in \cite{DV2021} to study the Koszul properties of quadratic monomial ideals. 

In this paper, we study  WLP of $\A$ in degree $1$ and $d=\dim \D$, when $\k$ is a field of characteristic $0$ (our results also hold if the characteristic of $\k$ is large enough, see \cite[Proposition 7.2]{MN2013}). Our first main result completely characterizes  WLP in degree $1$, complementing the work in \cite{MNS2020}.

\begin{theorem}(Theorem \ref{deg1}) 
Let $\D$ be a simplicial complex, $G(\D)$ the $1$-skeleton of $\D$, and $A=A(\D)$ the Artinian ring defined by the Stanley-Reisner ideal of $\D$ plus the squares of all variables. 
 \begin{enumerate}
         \item[(i)] If $\dim_\k A_2 \ge \dim_\k A_1$, then  WLP holds in degree $1$ if and only if $G(\D)$ has no bipartite components.
         \item[(ii)] If $\dim_\k A_2 < \dim_\k A_1$, then  WLP holds in degree $1$ if and only if each bipartite component of $G(\D)$ (if it exists) is a tree and each non-bipartite component satisfies the property that the number of edges in the component is equal to the number of vertices in the component. Further, in this case, WLP holds in degree $1$ implies WLP holds in all degrees.
     \end{enumerate}
\end{theorem}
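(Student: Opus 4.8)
First I would analyze the multiplication map $\times\l\colon A_1\to A_2$ by a general linear form $\l=\sum_{i=1}^{n}a_ix_i$, using the monomial bases $\{x_v\}$ of $A_1$ (one per vertex of $\D$) and $\{x_ix_j\}$ of $A_2$ (one per edge of $G(\D)$). Because $x_i^2=0$ and $x_ix_j=0$ whenever $\{i,j\}$ is not a face, $\l\cdot x_v=\sum_{w:\{v,w\}\in E(G(\D))}a_w\,x_vx_w$, so $\sum_v c_vx_v\in\ker(\times\l)$ iff $a_jc_i+a_ic_j=0$ for every edge $\{i,j\}$ of $G(\D)$. For general $\l$ all $a_i$ are nonzero, so, setting $d_v=c_v/a_v$, these conditions become $d_i+d_j=0$ along every edge; propagating along paths, a connected component of $G(\D)$ contributes a one-dimensional solution space if it is bipartite and only $0$ if it contains an odd cycle. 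Thus $\dim_\k\ker(\times\l)=b$, the number of bipartite connected components of $G(\D)$, and $\rank(\times\l)=n-b=\dim_\k A_1-b$; note this rank is the same for every $\l$ with all coefficients nonzero. (Equivalently, the matrix of $\times\l$ is the unsigned vertex--edge incidence matrix of $G(\D)$ after multiplying rows and columns by nonzero scalars, and such a matrix has rank $n-b$ over a field of characteristic $0$.)

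Since WLP holds in degree $1$ precisely when $\rank(\times\l)=\min(\dim_\k A_1,\dim_\k A_2)=\min(n,|E|)$, where $E=E(G(\D))$, in case (i) (where $|E|\ge n$) this is equivalent to $n-b=n$, i.e. $b=0$, and in case (ii) (where $|E|<n$) to $n-b=|E|$, i.e. $\sum_C(v_C-e_C)=b$, the sum over connected components $C$, with $v_C,e_C$ the numbers of vertices and edges of $C$. Grouping this sum by whether $C$ is bipartite and using $e_C\ge v_C-1$ with equality iff $C$ is a tree (for bipartite $C$) and $e_C\ge v_C$, since a non-bipartite graph is not a tree (for non-bipartite $C$), one rewrites the identity as a sum of non-positive terms equal to $0$; hence it holds iff every bipartite component of $G(\D)$ is a tree and every non-bipartite component has $e_C=v_C$, which is the criterion in (ii). In particular the criterion does not depend on the chosen general $\l$.

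For the final assertion of (ii), I would first extract from this criterion a structure theorem for $\D$: each component of $G(\D)$ is a tree or is unicyclic with its unique cycle odd. Since a $k$-face of $\D$ yields a $(k+1)$-clique in $G(\D)$ and $K_4$ contains several cycles, $K_4$ is not a subgraph of any unicyclic graph, so $\dim\D\le2$ and $A_i=0$ for $i\ge4$; moreover every $2$-face is a triangle, necessarily the unique cycle of its (unicyclic) component, so distinct $2$-faces lie in distinct components with disjoint triangle edge sets, giving $3\dim_\k A_3\le|E|=\dim_\k A_2$. It then remains to see that $\times\l\colon A_2\to A_3$ has full rank. From $\l\cdot x_ix_j=\sum_w a_w\,x_ix_jx_w$ and the structure theorem, each edge $\{i,j\}$ lies in at most one $2$-face, so for a $2$-face $\{i,j,k\}$ the monomial $x_ix_j$ maps to $a_k\,x_ix_jx_k$ with $a_k\ne0$; hence every basis monomial of $A_3$ lies in the image, $\times\l\colon A_2\to A_3$ is surjective, and since $\dim_\k A_3\le\dim_\k A_2$ it has full rank. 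Combined with the trivially full-rank maps $A_0\to A_1$ and $A_i\to A_{i+1}$ for $i\ge3$, this yields WLP in all degrees.

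I expect the kernel/rank computation and the componentwise counting in (ii) to be routine. The step demanding the most care is the structural consequence of the degree-$1$ criterion in case (ii) — especially that $\dim\D\le2$ and that the $2$-faces of $\D$ are distributed one per unicyclic component — because it is this rigidity that makes $\times\l\colon A_2\to A_3$ surjective and thereby upgrades WLP in degree $1$ to WLP in all degrees.
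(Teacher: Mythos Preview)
Your argument is correct. The core of parts (i) and (ii) matches the paper's proof: both identify the matrix of $\times\l$ with (a rescaling of) the vertex--edge incidence matrix of $G(\D)$ and invoke the standard fact that its rank over characteristic $0$ is $n-b$, then run the same componentwise accounting. The only cosmetic difference is that the paper fixes $\l=x_1+\cdots+x_n$ (via the Migliore--Mir\'o-Roig--Nagel reduction for monomial ideals) and quotes the rank formula from Godsil--Royle, while you work with a general $\l$ having nonzero coefficients and compute the kernel directly; your rescaling $d_v=c_v/a_v$ is exactly what reduces the general case to the incidence matrix.

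Where you genuinely diverge is the final clause of (ii). The paper dispatches this in one line using its \cref{WLP_extns}: once $\dim_\k A_1>\dim_\k A_2$ and $\times\l\colon A_1\to A_2$ is surjective, $(A/\l A)_2=0$, and since every higher-degree monomial is a multiple of a degree-$2$ monomial, $(A/\l A)_j=0$ for all $j\ge2$, so all higher maps are surjective. You instead extract a structure theorem (each component of $G(\D)$ is a tree or odd-unicyclic, hence $\dim\D\le2$ and the $2$-faces are pairwise disjoint triangles, one per unicyclic component) and verify surjectivity of $\times\l\colon A_2\to A_3$ by hand. Your route is longer but yields more: the structural conclusion you obtain is precisely what the paper proves separately as \cref{cliq} and \cref{soc_3} to bound $\soc(A(\D))\le3$. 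So the paper's approach is more economical for the theorem as stated, while yours folds the socle-degree corollary into the same argument.
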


As an application, we show that if $J$ is a  monomial ideal containing the squares of variables with Artinian quotient $A$, and the multiplication map by a general linear form $A_1\to A_2$ is surjective, then the regularity of $J$ is at most $4$ (See \cref{soc_3}). This is related to the work by Eisenbud-Huneke-Ulrich on regularity of Tor in \cite{EHU2006} and the $p$-basepoint freeness  condition and Hankel index studied by Blekherman-Sinn-Velasco \cite{BSV2017}. See \cite[Definition 2.3, Theorem 2.4 in Section 2.2]{BSV2017}.

Finally, we use similar ideas to establish  WLP in degree $d=\dim \D$ using the dual graph of $\D$ (see \cref{dim2_charc}). In this direction, we have (see \cref{psd_wo_bdy} and \cref{psd_with_bdy}):

\begin{theorem} 
For $A=\A$ corresponding to a $d$-dimensional pseudomanifold $\D$,  WLP holds in degree $d$ if and only if:
\begin{enumerate}
    \item[(i)] $\D$ has boundary or
    \item[(ii)] $\D$ has no boundary, and the dual graph of $\D$ is not bipartite.
\end{enumerate}
\end{theorem}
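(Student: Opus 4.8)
The plan is to compute the rank of the multiplication map $\times\ell\colon A_d\to A_{d+1}$ for a generic linear form $\ell=\sum_v a_vx_v$ by turning it, after a diagonal change of basis, into a $0/1$ incidence matrix of the dual graph $\Gamma=\Gamma(\D)$, and then to read off the answer from the classical corank formula for unoriented incidence matrices of graphs. First I would record via \cref{inv_A} that $\dim_\k A_{d+1}=f_d(\D)$ is the number of facets and $\dim_\k A_d=f_{d-1}(\D)$; since each facet of a $d$-pseudomanifold has $d+1$ faces of dimension $d-1$ and each $(d-1)$-face lies in at most two facets, $(d+1)f_d\le 2f_{d-1}$, so $f_{d-1}\ge f_d$. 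Hence ``$\times\ell$ has full rank'' means ``$\times\ell$ is surjective'', and, surjectivity being Zariski-open in $\ell$, WLP holds in degree $d$ iff the matrix $M$ of $\times\ell$ with indeterminate coefficients has rank $f_d$ over $K:=\k(a_v:v)$. Writing $\ell\cdot x_G=\sum_{F\supseteq G,\ |F|=d+1} a_{F\setminus G}\,x_F$ for a $(d-1)$-face $G$, the matrix $M$ (rows indexed by facets, columns by $(d-1)$-faces) has entry $a_{F\setminus G}$ when $G\subset F$ and $0$ otherwise; interior $(d-1)$-faces give columns with two nonzero entries, boundary ones give columns with a single entry. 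So $M$ is a weighting of the incidence matrix of $\Gamma$ together with one extra ``pendant'' column per boundary $(d-1)$-face.

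The key step is to rescale the weights away. This rests on the lemma: for every closed walk $F^{(0)},\dots,F^{(m)}=F^{(0)}$ in $\Gamma$, if $u_i$ (resp.\ $w_i$) denotes the unique vertex of $F^{(i)}$ (resp.\ $F^{(i+1)}$) not lying in $F^{(i)}\cap F^{(i+1)}$, then $\prod_i a_{u_i}=\prod_i a_{w_i}$. I would prove this by fixing a vertex $v$ of $\D$ and looking at the cyclic binary sequence $\chi$ with $\chi_i=1$ iff $v\in F^{(i)}$: the step $i$ contributes $v$ to the $u$-multiset exactly when $(\chi_i,\chi_{i+1})=(1,0)$ and to the $w$-multiset exactly when $(\chi_i,\chi_{i+1})=(0,1)$, and a cyclic binary sequence has as many descents as ascents (the increments $\chi_{i+1}-\chi_i$ sum to $0$). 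Granting the lemma, over $K$ I would fix a spanning tree of the connected graph $\Gamma$, set $\nu_{F_0}=1$ at a root and propagate $\nu_{F^{(i+1)}}/\nu_{F^{(i)}}=a_{u_i}/a_{w_i}$ along tree edges; the lemma makes this consistent on the remaining edges, so $\nu$ is a well-defined assignment of units to facets with $\nu_{F_1}a_{F_1\setminus G}=\nu_{F_2}a_{F_2\setminus G}$ for every interior $(d-1)$-face $G$ with facets $F_1,F_2$. Putting $\mu_G=(\nu_F a_{F\setminus G})^{-1}$ for any facet $F\supseteq G$, the rescaling $\operatorname{diag}(\nu)\,M\,\operatorname{diag}(\mu)$ becomes the $0/1$ matrix $N$ whose columns are the edge-incidence vectors of $\Gamma$ together with one standard basis vector $e_F$ for each boundary $(d-1)$-face contained in $F$. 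In particular $\rank_K M=\rank N$.

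It then remains to compute $\rank N=f_d-\dim\{y:N^{\top}y=0\}$. The system $N^{\top}y=0$ says $y_{F_1}+y_{F_2}=0$ for every interior $(d-1)$-face (with facets $F_1,F_2$) and $y_F=0$ for every boundary $(d-1)$-face (with facet $F$). If $\D$ has boundary, some coordinate of $y$ vanishes and connectedness of $\Gamma$ forces $y=0$, so $\rank N=f_d$ and $\times\ell$ is surjective; this is case (i). If $\D$ has no boundary, the system reduces to $y_{F_1}+y_{F_2}=0$ over all edges of $\Gamma$, whose solution space is $0$ when $\Gamma$ is not bipartite (so $\rank N=f_d$ and WLP holds in degree $d$) and is one-dimensional when $\Gamma$ is bipartite (so $\rank N=f_d-1<f_d$ and WLP fails). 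This is exactly the stated dichotomy.

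The main obstacle is the combinatorial lemma and the bookkeeping of the rescaling it powers; once the weights are removed everything collapses to the textbook fact that a connected graph's unoriented incidence matrix has corank $1$ if the graph is bipartite and corank $0$ otherwise. I would also be careful to use ``pseudomanifold'' with the strong-connectedness convention (connected dual graph), which is what makes the propagation arguments go through; if the dual graph were allowed to be disconnected, the statement should be read componentwise, with WLP in degree $d$ failing precisely when some boundaryless component has bipartite dual graph.
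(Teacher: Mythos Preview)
Your argument is correct, but it takes a longer road than the paper does. The paper exploits the fact (recorded here as \cref{WLP}, from \cite{MMN2011}) that for an Artinian \emph{monomial} ideal it suffices to test the single linear form $\ell=x_1+\cdots+x_n$. With all $a_v=1$ the matrix $M$ you describe is already the $0/1$ incidence matrix of the dual graph (together with the pendant columns in the boundary case), and your closed--walk lemma and spanning--tree rescaling become unnecessary. From there the paper's two halves are short: in the boundaryless case one simply quotes the corank formula for the unoriented incidence matrix of a connected graph; in the boundary case the paper does not even frame things as a rank computation, but gives a direct telescoping argument along a facet chain from any facet to a boundary facet to exhibit a preimage of each $x_F$.

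What your approach buys is self-containment and some extra generality: you never invoke the monomial--ideal reduction, and your rescaling lemma would still let one identify the generic rank with the rank of the $0/1$ matrix in situations where \cref{WLP} is unavailable. The cost is the extra combinatorics of the closed--walk identity and the tree propagation, which the paper avoids entirely. Your handling of the boundary case via the kernel equations $y_{F_1}+y_{F_2}=0$ and $y_F=0$ is cleaner and more uniform than the paper's telescoping, and your final remark about reading the statement componentwise when the dual graph is disconnected is a useful caveat the paper does not spell out, since it builds strong connectedness into the definition of pseudomanifold.
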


The above results combine to give the following corollary: when $\D$ is a two-dimensional pseudomanifold,  WLP fails for $A(\D)$ if and only $\D$ has no boundary and the dual graph of $\D$ is bipartite (\cref{dim2_charc}). Furthermore, when $\D$ is a planar triangulation without boundary, the dual graph is not bipartite if and only if the graph of $\D$ is not Eulerian. (\cref{eulerian}). Moreover, the first barycentric subdivisions of triangulations of a two-dimensional pseudomanifold without boundary always give $A(\D)$ that fail WLP (\cref{bary}). \\

In Section 5, we construct Artinian Gorenstein algebras that fail WLP using  our previous results and the technique of Nagata idealization inspired by \cite{MSS2021, MS2020} (see \cref{Gor_ex}). In particular, we construct Artinian Gorenstein algebra in $2n$ variables that fail WLP in degree $1$ and having unimodal Hilbert series, for each $n \ge 4$.\\

\noindent\textbf{Acknowledgements}: We thank Aaron Dall, Alessio D'Alì, Serge Lawrencenko, Hal Schenck and Alexandra Seceleanu and an anonymous referee for some helpful correspondence on the topics of this work.

\section{Preliminary definitions}

Let $V$ be a finite non-empty set of $n$ vertices. An \emph{(abstract) simplicial complex} $\D$ is a non-empty subset of the power set of $V$ such that $F \subseteq E, E \in \D \implies F \in \D$. The elements of $\Delta$ consisting of $k+1$ vertices are called \emph{$k$-faces} or \emph{$k$-simplices} (dimension of the face is $k$). The faces which are maximal with respect to inclusion are called \emph{facets}. The dimension of $\Delta$, $\dim(\Delta) = max \{ \dim(F) : F \text{ is a facet of } \Delta \}$. $\Delta$ is said to be \emph{pure} if every facet has the same dimension. The \emph{$d$-skeleton} of $\D$, denoted by $\D_d$, is the set of all faces of dimension at most $d$. The \emph{$f$-vector} of $\D$, $f(\D) = (f_{-1}(\D), f_0(\D), \ldots, f_{\dim(\D)}(\D))$ is defined as $f_i(\D) = $ number of $i$-faces of $\D$, with $f_{-1}(\D) = 1$ ($\emptyset \in \D$). We define $\Delta$ by defining the facets. \\
Let $S = \k[x_1, \ldots, x_n]$ be the polynomial ring in $n$ variables over a field $\k$ of characteristic $0$. For a given $\D$, define the ideal 
 $ I_\Delta = \la x_{i_1} \cdots x_{i_m} : \{i_1 \ldots i_m \} \notin \Delta \ra$. Then, $\k[\Delta] = S/I_\Delta$ is the \emph{Stanley-Reisner ring} (or face ring) of $\D$. We further define the ideal $J_\D = \la x_1^2, x_2^2, \ldots, x_n^2, I_\Delta \ra$ and consider the quotient ring $A(\D) = S/J_\D$. \\

\begin{example}
$\Delta = \la123, 134, 45\ra$. The non-empty faces of $\D$ are \\$\{1\}, \{2\}, \{3\}, \{4\}, \{5\}, \{12\}, \{13\}, \{23\}, \{14\}, \{34\}, \{45\}, \{123\},  \{134\}$.
\begin{center}

\tikzset{every picture/.style={line width=0.75pt}} 

\begin{tikzpicture}[x=0.75pt,y=0.75pt,yscale=-1,xscale=1]

\draw  [fill={rgb, 255:red, 116; green, 146; blue, 199 }  ,fill opacity=1 ] (176,206) -- (221,151) -- (138,150) -- cycle ;
\draw  [color={rgb, 255:red, 0; green, 0; blue, 0 }  ,draw opacity=1 ][fill={rgb, 255:red, 132; green, 174; blue, 216 }  ,fill opacity=1 ] (138,150) -- (176,206) -- (100,206) -- cycle ;
\draw    (221,151) -- (249,208) ;

\draw  [line width=0.75]   (138, 136) circle [x radius= 13.6, y radius= 13.6]   ;
\draw (132.07,128.35) node [anchor=north west][inner sep=0.75pt]  [rotate=-0.5]  {$1$};
\draw    (91, 216) circle [x radius= 13.6, y radius= 13.6]   ;
\draw (85,208.4) node [anchor=north west][inner sep=0.75pt]    {$2$};
\draw    (184, 217) circle [x radius= 13.6, y radius= 13.6]   ;
\draw (178,209.4) node [anchor=north west][inner sep=0.75pt]    {$3$};
\draw    (224.22, 137) circle [x radius= 13.72, y radius= 13.72]   ;
\draw (224.37,129.4) node [anchor=north] [inner sep=0.75pt]  [xslant=0.02]  {$4$};
\draw    (257, 219) circle [x radius= 13.6, y radius= 13.6]   ;
\draw (251,211.4) node [anchor=north west][inner sep=0.75pt]    {$5$};

\end{tikzpicture}

\end{center}
 $I_{\D} = \la x_1x_5, x_2x_4, x_2x_5, x_3x_5 \ra \subseteq S = \k[x_1, x_2, x_3, x_4, x_5]$ and $\k[\Delta] = S/\la x_1x_5, x_2x_4, x_2x_5, x_3x_5 \ra$. $J_{\D} = \la x_1^2, x_2^2, x_3^2, x_4^2, x_5^2, x_1x_5, x_2x_4, x_2x_5, x_3x_5 \ra$ and $\A = S/ \la x_1^2, x_2^2, x_3^2, x_4^2, x_5^2, x_1x_5, x_2x_4, x_2x_5, x_3x_5 \ra$.
 
 \end{example}

\begin{definition} \label{soc}
For a standard graded Artinian ring $A$, \emph{socle} of $A$ is given by $(0:A_{n \ge 1})$.We call any highest degree monomial of the ring a \emph{top socle}, and the degree of this monomial is the \emph{(top) socle degree}, denoted by $\soc(A)$. If $(0:A_{n \ge 1}) = A_{\soc(A)}$, then $A$ is a \emph{level} $\k$-algebra and, if $\dim_{\k}(0:A_{n \ge 1})=1$, $A$ is \emph{Gorenstein}.
\end{definition} 

By labelling the vertices of $V$ by $x_i$, we see that each face $F = \{i_1i_2 \ldots i_k\}$ of $\D$ represents a monomial $x_F = x_{i_1}x_{i_2}\cdots x_{i_k}$ in $A(\D)$. This gives us the following result. \\

\begin{proposition} ~ \label{inv_A} 
    \begin{enumerate}
        \item[(i)] The monomials in $\A$ are in one-one correspondence with the faces of $\D$. For $i>0$, $A(\D)_i$ is a $\k$-vector space with a basis given by $\{ x_F: F \text{ is an }(i-1)-\text{face of } \D \}$.  
        \item[(ii)] The Hilbert Series of $\A$, $\hilb(t) = \sum_{i\ge 0} f_{i-1}(\D)\ t^i$.
        \item[(iii)] $A(\D_i) = \bigoplus_{j \leq i+1}(\A)_j$.
      \item[(iv)] A top socle of $\A$ is given by $x_F$, where $F$ is any facet of $\D$ of dimension $\dim(\D)$. The top socle degree, $\soc(\A) = \dim(\D) + 1$. Note that this number is the (Castelnuovo-Mumford) regularity of $\A$. 
    \end{enumerate}
\end{proposition}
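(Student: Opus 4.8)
The plan is to reduce every part to the standard monomial $\k$-basis of a quotient of a polynomial ring by a monomial ideal. First I would note that since $J_\D$ contains every $x_i^2$, every monomial with a repeated variable dies in $A(\D)$, so the images of the \emph{squarefree} monomials already span $A(\D)$ as a $\k$-vector space. A squarefree monomial $x_{i_1}\cdots x_{i_k}$ maps to $0$ in $A(\D)$ exactly when it lies in $I_\D$, i.e.\ when $\{i_1,\dots,i_k\}\notin\D$; the remaining squarefree monomials map to $\k$-linearly independent elements because $A(\D)$ is a monomial quotient. This yields the bijection $F\mapsto x_F$ between faces of $\D$ (with the empty face corresponding to $1\in A(\D)_0$) and a $\k$-basis of $A(\D)$; since $\deg x_F=\dim F+1$, the degree-$i$ piece has basis $\{x_F:\dim F=i-1\}$, which is (i), and (ii) follows at once from $\dim_\k A(\D)_i=f_{i-1}(\D)$.

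For (iii) I would use that the faces of the $i$-skeleton $\D_i$ are precisely the faces of $\D$ of dimension $\le i$. The containment $\D_i\subseteq\D$ gives $I_\D\subseteq I_{\D_i}$, hence $J_\D\subseteq J_{\D_i}$ and a graded surjection $A(\D)\twoheadrightarrow A(\D_i)$; by (i) this map is bijective in each degree $j\le i+1$ and zero in degrees $j>i+1$, which is exactly the asserted identification $A(\D_i)=\bigoplus_{j\le i+1}A(\D)_j$ of graded vector spaces.

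For (iv), let $F$ be a facet with $\dim F=\dim\D=:d$. For any variable $x_j$: if $j\in F$ then $x_j^2\mid x_jx_F\in J_\D$, so $x_jx_F=0$; if $j\notin F$ then $F\cup\{j\}\notin\D$ because $F$ is a facet, so $x_jx_F\in I_\D$ and again $x_jx_F=0$. Hence $x_F\in(0:A(\D)_{\ge 1})$, so the socle is nonzero in degree $d+1$; since $A(\D)_j=0$ for $j>d+1$ by (i), $\soc(A(\D))=d+1$. The regularity claim then follows from the standard fact that for a finite-length graded algebra $A$ the only nonzero local cohomology module is $H^0_{\mathfrak m}(A)=A$, so $\operatorname{reg}(A)=\max\{j:A_j\ne0\}$, which here equals $d+1$.

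Everything here is elementary; the only points that need a moment of care are handling the empty face / degree-$0$ term in the bijection of (i), keeping the identification in (iii) at the level of graded vector spaces (the truncation of a ring is not a subring), and recalling the description of the Castelnuovo-Mumford regularity of an Artinian algebra used in (iv). I would not expect any real obstacle beyond this bookkeeping.
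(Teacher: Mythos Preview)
Your proposal is correct and follows the same approach as the paper, which merely states the proposition after the one-line observation that each face $F=\{i_1,\dots,i_k\}$ of $\D$ represents the monomial $x_F=x_{i_1}\cdots x_{i_k}$ in $\A$ and gives no further proof. Your write-up is simply a careful fleshing-out of that observation, with the additional standard remarks on the empty face, the graded-vector-space nature of the identification in (iii), and the local-cohomology description of regularity for Artinian algebras in (iv); none of this deviates from the intended argument.
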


The following definition is from \cite{MN2013}.

\begin{definition}
Let $A$ be a graded Artinian algebra and $\ell$ be a general linear form.\\
$A$ has the \emph{Weak Lefschetz Property (WLP)} if the homomorphism induced by multiplication by $\l$, $\mu_i : A_i \to A_{i+1}$, has maximal rank for all $i$ (i.e., is injective or surjective). \\
Such an $\l$ is called a \emph{Lefschetz element}.

\noindent $A$ is said to have the \emph{Strong Lefschetz Property (SLP)} if the homomorphism induced by multiplication by $\l^j$,
$   \mu_i^j : A_i \to A_{i+j}$, has maximal rank for all $i$ and $j$.\\
Note that unless specified otherwise, $\mu_i = \mu_i^1$.
\end{definition}

The motivation for the study of Lefschetz properties comes from a result first proved by R. P. Stanley in $1980$, which states that SLP holds for $S/I$, where $S=\k[x_1, \ldots, x_n]$ with $char \ \k = 0$ and $I$ is an Artinian monomial complete intersection \cite{S1980}.

\begin{proposition} \label{WLP}  \cite[Proposition 2.2]{MMN2011}
Let $I \subseteq S$ be an Artinian monomial ideal and assume that the field $\Bbbk$ is infinite. Then, $S/I$ has the  WLP iff $\l : = x_1 + \cdots + x_n$ is a Lefschetz element for $S/I$.
\end{proposition}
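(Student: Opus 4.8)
The plan is to treat this as a combination of two soft facts: semicontinuity of the rank of the multiplication maps, and invariance of everything in sight under the diagonal torus, which is available precisely because $I$ is monomial. I would introduce the \emph{Lefschetz locus} $U \subseteq A_1 \cong \k^n$, the set of linear forms $\ell = \sum_j a_j x_j$ such that $\mu_i^{\ell}\colon A_i \to A_{i+1}$ has maximal rank for every $i$.

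First I would check that $U$ is Zariski-open and that WLP is equivalent to $U \neq \emptyset$. Since $A$ is Artinian, only finitely many $A_i$ are nonzero; fixing $\k$-bases of the graded pieces, the matrix of $\mu_i^{\ell}$ has entries linear in $a_1, \dots, a_n$, so ``$\mu_i^{\ell}$ has maximal rank'' is the non-vanishing of some maximal minor, an open condition, and $U$ is the intersection of these finitely many open sets. As $A_1$ is an irreducible variety, $U$ is either empty or dense open; by the definition of WLP (a \emph{general} linear form is a Lefschetz element), $A$ has WLP if and only if $U$ is nonempty, in which case it is dense.

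Next comes the torus symmetry. The group $T = (\k^{\ast})^n$ acts on $S$ by $x_j \mapsto t_j x_j$; since $I$ is a monomial ideal it is $T$-stable, so $T$ acts on $A$ by graded $\k$-algebra automorphisms $g_t$. From $g_t(\ell m) = g_t(\ell)\, g_t(m)$ one gets $\mu_i^{g_t(\ell)} = g_t \circ \mu_i^{\ell} \circ g_t^{-1}$ on $A_i$, and since $g_t$ restricts to an isomorphism on each graded piece, $\mu_i^{g_t(\ell)}$ and $\mu_i^{\ell}$ have the same rank for all $i$; hence $U$ is $T$-stable. Now if $\ell = x_1 + \cdots + x_n$ is a Lefschetz element, then $U \neq \emptyset$, so $A$ has WLP. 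Conversely, if $A$ has WLP, then $U$ is a nonempty dense open subset of $A_1$, so it must meet the nonempty open set $V = \{\sum_j a_j x_j : a_j \neq 0 \text{ for all } j\}$, which is exactly the $T$-orbit of $x_1 + \cdots + x_n$. Picking $\sum_j a_j x_j \in U \cap V$ and applying $g_t$ with $t = (a_1^{-1}, \dots, a_n^{-1})$ sends it to $x_1 + \cdots + x_n$, which therefore lies in $U$, i.e. is a Lefschetz element.

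The argument has no serious obstacle; the one place to be attentive is the coordination of the two halves of the last step — the \emph{open} Lefschetz locus could a priori be a thin set, but WLP forces it to be \emph{dense}, and irreducibility of $A_1$ then guarantees that it meets the (also dense) torus orbit of $x_1 + \cdots + x_n$. The remainder is the routine observations that monomial ideals are torus-stable and that the rank of a linear map is unchanged under conjugation by a linear isomorphism.
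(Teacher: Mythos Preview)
The paper does not supply a proof of this proposition; it is quoted with a citation to \cite[Proposition 2.2]{MMN2011} and used thereafter as a black box. Your argument is correct and is essentially the standard proof (and, modulo phrasing, the one in the cited reference): the Lefschetz locus $U\subseteq A_1$ is a finite intersection of nonvanishing-of-minor conditions, hence Zariski-open; the diagonal torus $(\k^\ast)^n$ stabilizes the monomial ideal $I$ and therefore $U$; and the open torus orbit $V=\{\sum a_jx_j : a_j\neq 0\}$ contains $x_1+\cdots+x_n$, so once $U$ is nonempty it meets $V$ and the torus drags the intersection point back to $x_1+\cdots+x_n$.

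One small point worth making explicit in your write-up: the hypothesis that $\k$ is infinite is used exactly at the step ``$U$ nonempty open and $V$ nonempty open in $A_1\cong\k^n$ implies $U\cap V\neq\emptyset$.'' Over a finite field the set of $\k$-rational points of $\mathbb{A}^n$ is finite and two nonempty Zariski-open subsets need not meet, so the appeal to irreducibility of $A_1$ should be accompanied by the remark that for infinite $\k$ any nonzero polynomial in $\k[a_1,\dots,a_n]$ has a nonvanishing point in $\k^n$. Otherwise the argument is complete.
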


\begin{remark}
 By part $(iii)$ of \cref{inv_A}, to check for  WLP in degree $i$, i.e.,  $A(\D)_{i} \to A(\D)_{i+1}$, we need to consider only $A(\D_i)$. \\
 Henceforth, we use $\mu_i:A(\D)_i \to A(\D)_{i+1}$ to denote the multiplication map by the linear form $\ell : = x_1 + \cdots + x_n$. We consider the $\k$-bases of $A(\D)_i$ and $A(\D)_{i+1}$, consisting of their monomials in lex order $(x_1 > x_2 > \cdots > x_n)$, to get the matrix representing the map $\mu_i$, and this matrix shall be denoted by $[\mu_i]$.  In terms of the  simplicial complex $\Delta$, by part $(i)$ of \cref{inv_A}, for any $1 \le i \le \dim(\D)$ and an $(i-1)$- face $F$, \\
    \[
        \mu_i(x_F)  = \sum_{\mathclap{\substack{F \subseteq G\\ \dim(G) = \dim(F) + 1}}} \ x_G
   \]
   
 where $x_F$ is the monomial in $\A$ that corresponds to face $F \in \D$.
\end{remark}

\begin{example} 
 Let $I_{\D} \subseteq S = \k[x_1,\ldots, x_7]$ be the edge ideal of path on $7$ vertices and $J_{\D}= \la x_1^2,x_2^2,x_3^2,x_4^2,x_5^2,x_6^2,x_7^2, I_{\D}\ra$. Note that $\D$ here is the independence complex of the path on $7$-vertices. We wish to examine  WLP in degree $1$ for $\A = S/J_{\D}$. Taking $\ell = x_1 + \cdots + x_7$, the associated matrix is given by

 $$  \begin{bmatrix}
 & x_1 & x_2 & x_3 & x_4 & x_5 & x_6 & x_7\\
x_1x_3 & 1 & 0 & 1 & 0 & 0 & 0 & 0\\
x_1x_4 & 1 & 0 & 0 & 1 & 0 & 0 & 0\\
x_1x_5 & 1 & 0 & 0 & 0 & 1 & 0 & 0\\
x_1x_6 & 1 & 0 & 0 & 0 & 0 & 1 & 0\\
x_1x_7 & 1 & 0 & 0 & 0 & 0 & 0 & 1\\
x_2x_4 & 0 & 1 & 0 & 1 & 0 & 0 & 0\\
x_2x_5 & 0 & 1 & 0 & 0 & 1 & 0 & 0\\
x_2x_6 & 0 & 1 & 0 & 0 & 0 & 1 & 0\\
x_2x_7 & 0 & 1 & 0 & 0 & 0 & 0 & 1\\
x_3x_5 & 0 & 0 & 1 & 0 & 1 & 0 & 0\\
x_3x_6 & 0 & 0 & 1 & 0 & 0 & 1 & 0\\
x_3x_7 & 0 & 0 & 1 & 0 & 0 & 0 & 1\\ 
x_4x_6 & 0 & 0 & 0 & 1 & 0 & 1 & 0\\
x_4x_7 & 0 & 0 & 0 & 1 & 0 & 0 & 1\\ 
x_5x_7 & 0 & 0 & 0 & 0 & 1 & 0 & 1\\ 
\end{bmatrix}$$
\\
The above matrix has full rank. Hence,  WLP holds in degree $1$ for $\A$. In general, examining WLP by checking the rank of a matrix can be cumbersome! In \cref{n-path}, we revisit WLP in degree $1$ for $\A$ where $\D$ is the independence complex of $n$-path.
\end{example}

 The following result is a restatement of \cite[Proposition 2.1 (a)]{MMN2011}, in the context of the ring $\A$.
 
\begin{proposition} \label{WLP_extns}
Given a simplicial complex $\D$ of dimension $d$ and the corresponding $\A$, if $\dim_\k \A_i > \dim_\k \A_{i+1}$ for some $i \ge 1$ and if WLP holds in degree $i$, then, WLP holds in all degrees greater than $i$.
\end{proposition}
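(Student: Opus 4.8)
The plan is to reduce everything to surjectivity and then propagate it upward. First I would unwind the definitions: ``WLP holds in degree $i$'' means the multiplication map $\mu_i\colon A_i\to A_{i+1}$ by a general linear form $\ell$ has maximal rank (by \cref{WLP} one may even take $\ell=x_1+\cdots+x_n$). Since we are given $\dim_\k A_i>\dim_\k A_{i+1}$, the map $\mu_i$ cannot be injective, so maximal rank forces it to be surjective; equivalently $A_{i+1}=\ell A_i$.

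Next I would push surjectivity to all higher degrees using only that $A$ is standard graded (it is a quotient of $S=\k[x_1,\dots,x_n]$, so $A_{j+1}=A_1\cdot A_j$ for every $j$). From $A_{i+1}=\ell A_i$ one gets
\[
A_{i+2}=A_1\cdot A_{i+1}=A_1\cdot(\ell A_i)=\ell\cdot(A_1 A_i)=\ell A_{i+1},
\]
and iterating yields $A_{j+1}=\ell A_j$, i.e. $\mu_j$ is surjective, for every $j\ge i$. (Equivalently: the standard graded algebra $A/\ell A$ vanishes in degree $i+1$, hence in every degree $\ge i+1$, since it is generated in degree $1$.)

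Finally I would observe that a surjective graded linear map automatically has maximal rank, because $\rank\mu_j=\dim_\k A_{j+1}\ge\min(\dim_\k A_j,\dim_\k A_{j+1})\ge\rank\mu_j$ forces equality throughout. Hence $\mu_j$ has maximal rank for all $j\ge i$, which in particular gives WLP in every degree greater than $i$, as claimed. I do not anticipate a genuine obstacle here: this is the monomial-ideal instance of \cite[Proposition 2.1(a)]{MMN2011} transplanted to $A(\D)$, and the only mild point to watch is to fix one general $\ell$ at the outset, so that the surjectivity of $\mu_i$ for that particular $\ell$ is exactly what gets propagated through all higher degrees.
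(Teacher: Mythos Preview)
Your proof is correct and follows essentially the same route as the paper: both deduce surjectivity of $\mu_i$ from the dimension drop and then propagate it to all higher degrees, the paper via the face containment $F\subseteq E$ (so $x_E\in\langle x_F\rangle\subseteq \ell A$) and you via the equivalent abstract statement $A_{j+1}=A_1\cdot A_j$. The only cosmetic difference is that the paper phrases the propagation combinatorially in terms of faces of $\Delta$, while you phrase it algebraically in terms of standard gradedness; neither adds or loses anything substantive.
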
 

\begin{proof}

When $\dim_\k \A_i > \dim_\k \A_{i+1}$ for some $i \ge 1$, if WLP holds in degree $i$ then, $\dim_\k \left(\frac{\A}{\ell \A} \right)_{i+1} = 0$ by surjectivity. So, every monomial in $\A$ of degree $i+1$ is in the ideal $\ell \A$. By \cref{inv_A} (i), for any $j > i$ and monomial $x_E$ of degree $j+1$, the corresponding $j$-face $E$ has some $i$-face $F$, i.e., $x_E \in \la x_F \ra \subseteq \ell \A$. Thus, $\dim_\k \left(\frac{\A}{\ell \A} \right)_{j} = 0$ for all $j > i$. Hence, $\mu_j: \A_j \to \A_{j+1}$ is surjective for all $j \ge i$.
\end{proof}

\section{Weak Lefschetz Property in degree $1$}

In this section, we characterize  WLP in degree $1$ for $\A$ in terms of the $1$-skeleton of $\D$. We denote the $1$-skeleton of $\Delta$ by $G(\D)$, where $V = \{x_1, \ldots, x_n\}$ is the vertex set and $E$ is the edge set consisting of monomial quadrics in $A(\D)$. Let $I(G)$ be the incidence matrix of $G$, i.e., a matrix having dimension $|V| \times |E|$, with $(i,j)^{th}$-entry equal to $1$ if vertex $x_i$ is incident with edge $e_j$ (i.e., vertex $x_i$ divides monomial $e_j$), and zero otherwise. We take the vertices and edges in lex order for writing the incidence matrix.\\

The following result (\cite[Theorem 8.2.1]{GR2001}) is instrumental in proving the results of this paper. For a graph $G$, we define the notation $b_G$ as the number of connected bipartite components.

\begin{proposition} \label{rk}
Given a graph $G$ with $n$ vertices and $b_G$ bipartite connected components, the rank of the incidence matrix  of $G$ over a field of characteristic $0$ is given by $n - b_G$.
\end{proposition}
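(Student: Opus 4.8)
The statement is classical (it is \cite[Theorem 8.2.1]{GR2001}), and the plan is to reprove it by computing the dimension of the \emph{left} kernel of $I(G)$ — the space of linear relations among its rows — and showing it equals $b_G$; since $I(G)$ has $n$ rows, the rank is then $n - b_G$. The rows are naturally grouped by which connected component their vertex lies in, and no edge joins two distinct components, so after reordering vertices and edges the matrix $I(G)$ is block diagonal with one block $I(C)$ for each connected component $C$. Hence it suffices to prove that $\rank I(C) = |V(C)|$ when $C$ is non-bipartite and $\rank I(C) = |V(C)| - 1$ when $C$ is bipartite, and then add these up.

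Fix a connected component $C$ and suppose $(c_v)_{v \in V(C)}$ is a row relation, i.e.\ for every edge $e = \{u,v\}$ of $C$ we have $c_u + c_v = 0$. Propagating this constraint along a walk in $C$, the value $c_v$ is determined, up to an overall sign, by its value at one fixed vertex; since $C$ is connected, the space of such relations is therefore at most $1$-dimensional. If $C$ is bipartite with parts $X \sqcup Y$, then setting $c_v = 1$ for $v \in X$ and $c_v = -1$ for $v \in Y$ satisfies $c_u + c_v = 0$ on every edge (each edge crosses the bipartition), so the relation space is exactly $1$-dimensional and $\rank I(C) = |V(C)| - 1$. If $C$ is non-bipartite, it contains an odd closed walk $v_0 v_1 \cdots v_{2k+1}$ with $v_{2k+1} = v_0$; the relations force $c_{v_0} = -c_{v_1} = c_{v_2} = -c_{v_3} = \cdots = -c_{v_{2k+1}} = -c_{v_0}$, so $2c_{v_0} = 0$, and since $\operatorname{char}\k = 0$ we get $c_{v_0} = 0$. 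Connectedness then forces $c \equiv 0$ on all of $V(C)$, so $I(C)$ has full row rank $|V(C)|$.

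Summing over the connected components of $G$: writing $b_G$ for the number of bipartite components and letting the non-bipartite ones contribute their full vertex counts, we obtain $\rank I(G) = \sum_C \rank I(C) = \bigl(\sum_C |V(C)|\bigr) - b_G = n - b_G$, which is the claim.

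I expect the only genuine subtlety to be the role of the characteristic hypothesis: the implication $2c_{v_0} = 0 \Rightarrow c_{v_0} = 0$ is exactly where $\operatorname{char}\k \neq 2$ enters, and it genuinely fails over $\mathbb{F}_2$ (there the all-ones vector is always a row relation, and the rank drops to $n$ minus the \emph{total} number of components). So the write-up must flag that step carefully; everything else is routine bookkeeping with the block structure coming from connected components.
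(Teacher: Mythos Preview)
Your argument is correct and is essentially the standard proof (as in Godsil--Royle). Note, however, that the paper does not supply its own proof of this proposition: it simply cites \cite[Theorem 8.2.1]{GR2001} and moves on, so there is nothing in the paper to compare your approach against beyond the reference itself.
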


Let $\mu_1:A(\D)_1 \to A(\D)_2$ be the multiplication map by the linear form $\ell = x_1 + x_2 + \cdots + x_n$\ and $[\mu_1]$ be the matrix representing this map with the monomial $\k$-bases of $\A_1,\ \A_2$ taken in lex order.

\begin{lemma}\label{trnsp}
The matrix $[\mu_1]$ is the transpose of the incidence matrix $I(G)$ of the $1$-skeleton $G(\D)$ of $\D$.
\end{lemma}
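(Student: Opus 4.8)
The plan is to compare the two matrices column-by-column (equivalently, row-by-row after transposing). By \cref{inv_A}(i), the monomial $\k$-basis of $A(\D)_1$ is $\{x_1,\dots,x_n\}$, indexed by the vertices of $G(\D)$, and the monomial $\k$-basis of $A(\D)_2$ is $\{x_ix_j : \{i,j\}\in\D\}$, indexed by the edges of $G(\D)$. Both bases are taken in lex order, which is exactly the order used to build $I(G)$. So the rows of $[\mu_1]$ are indexed by edges of $G$ and its columns by vertices of $G$, matching the shape of $I(G)^{T}$ ($|E|\times|V|$).

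Next I would compute a single entry. Fix a vertex $x_k$ (a column of $[\mu_1]$). From the formula recorded in the Remark, $\mu_1(x_k)=\sum_{\{k\}\subseteq G,\ \dim G = 1} x_G = \sum_{j:\,\{k,j\}\in\D} x_kx_j$, i.e. the image of $x_k$ is the sum of exactly those degree-$2$ basis monomials $x_kx_j$ corresponding to edges of $G(\D)$ containing the vertex $x_k$. Hence the $(e,k)$-entry of $[\mu_1]$ is $1$ if the edge $e$ contains vertex $x_k$ and $0$ otherwise. By definition, the $(k,e)$-entry of the incidence matrix $I(G)$ is $1$ precisely when vertex $x_k$ is incident to edge $e$, so the $(e,k)$-entry of $[\mu_1]$ equals the $(k,e)$-entry of $I(G)$; that is, $[\mu_1] = I(G)^{T}$.

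There is essentially no hard step here; the only thing to be careful about is bookkeeping — making sure the row/column orderings genuinely agree (both come from lex order on vertices, resp. edges) so that the equality of matrices, and not merely of matrices up to simultaneous permutation, holds. I would state the identification of index sets explicitly and then just read off the entries as above. Combined with \cref{rk}, this immediately yields $\rank[\mu_1] = \rank I(G) = n - b_{G(\D)}$, which is the form in which the lemma will be used in the sequel.
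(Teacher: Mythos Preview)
Your proposal is correct and follows essentially the same approach as the paper: identify the bases of $A(\D)_1$ and $A(\D)_2$ with vertices and edges via \cref{inv_A}(i), compute $\mu_1(x_k)=\sum_{\{k,j\}\in\D} x_kx_j$, and read off that the column of $[\mu_1]$ indexed by $x_k$ has a $1$ exactly in the rows indexed by edges incident to $x_k$. You are a bit more careful than the paper in noting that both orderings are lex, so the equality holds on the nose rather than up to permutation.
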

\begin{proof}
Given $\D$, by part $(i)$ of \cref{inv_A}, the monomials in $\A_2$ are in one-one correspondence with the edges of $G(\D)$. For a vertex $x_i$,
\begin{align*}
    \mu_1(x_i) & = x_i\cdot \ell  = \sum_{\mathclap{\{x_ix_j\} \ \in \ \D}} x_ix_j
\end{align*}
In the matrix $[\mu_1]$, the column corresponding to $x_i$ will have $1$ in the rows corresponding to quadrics in $\A$ of the form $x_ix_j$ and zero otherwise. Hence, this is the transpose of $I(G)$, where the rows are labelled by the vertices and columns by the edges.\\
\end{proof}

\begin{theorem} ~ \label{deg1}

 \begin{enumerate}
         \item[(i)] If $\dim_\Bbbk \A_2 \ge \dim_\Bbbk \A_1$, then,  WLP holds in degree $1$ if and only if the $1$-skeleton $G(\D)$ of $\D$ has no bipartite components.
         \item[(ii)] If $\dim_\Bbbk \A_2 < \dim_\Bbbk \A_1$, then,  WLP holds in degree $1$ if and only if each bipartite component of $G(\D)$ (if it exists) is a tree and each non-bipartite component satisfies the property that the number of edges in the component is equal to the number of vertices in the component. Further, in this case, WLP holds in degree $1$ implies WLP holds in all degrees.
     \end{enumerate}
\end{theorem}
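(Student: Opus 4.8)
The plan is to translate the statement entirely into linear algebra via Lemmas~\ref{trnsp} and~\ref{rk}, and then analyze the two cases by rank count. Recall that $\mu_1$ has maximal rank iff it is injective (when $\dim_\k A_1 \le \dim_\k A_2$) or surjective (when $\dim_\k A_1 > \dim_\k A_2$). By \cref{trnsp}, $[\mu_1]$ is the transpose of the incidence matrix $I(G)$ of $G=G(\D)$, so $\rank [\mu_1] = \rank I(G)$, and by \cref{rk} this rank equals $n - b_G$, where $n = \dim_\k A_1 = f_0(\D) = |V|$ is the number of vertices and $b_G$ is the number of bipartite connected components. Note also $\dim_\k A_2 = f_1(\D) = |E|$, the number of edges. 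So everything reduces to comparing $n$, $|E|$, and $n - b_G$.

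For part (i): here $\dim_\k A_1 = n \le |E| = \dim_\k A_2$, so WLP in degree $1$ means $\mu_1$ is injective, i.e. $\rank [\mu_1] = n$, i.e. $n - b_G = n$, i.e. $b_G = 0$ — exactly the statement that $G(\D)$ has no bipartite components. This direction is essentially immediate once the dictionary is set up. For part (ii): here $|E| = \dim_\k A_2 < \dim_\k A_1 = n$, so WLP means $\mu_1$ is surjective, i.e. $\rank [\mu_1] = |E|$, i.e. $n - b_G = |E|$. I would then argue that this rank condition is equivalent to the stated combinatorial description by decomposing $G$ into connected components $G_1, \dots, G_k$ with $n_i$ vertices and $m_i$ edges. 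For a connected graph, $m_i \ge n_i - 1$ always, with equality iff it is a tree; and $m_i \ge n_i$ if it contains a cycle, with $m_i = n_i$ iff it has exactly one cycle (a unicyclic graph). Also a connected bipartite graph contributes $n_i - 1$ to $\rank I(G)$ via \cref{rk} applied componentwise (rank of the incidence matrix of a connected graph is $n_i - 1$ if bipartite, $n_i$ otherwise). Summing, $n - b_G = \sum_i \rank I(G_i)$ and $|E| = \sum_i m_i$, so surjectivity $n - b_G = |E|$ becomes $\sum_i (m_i - \rank I(G_i)) = 0$. Since each summand $m_i - \rank I(G_i)$ is nonnegative (for bipartite $G_i$: $m_i - (n_i - 1) \ge 0$; for non-bipartite $G_i$: $m_i - n_i \ge 0$ because a non-bipartite connected graph has a cycle), the sum vanishes iff every summand vanishes: bipartite components must satisfy $m_i = n_i - 1$ (trees) and non-bipartite components must satisfy $m_i = n_i$ — which is exactly the claimed characterization.

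For the final sentence of (ii) — that WLP in degree $1$ implies WLP in all degrees — I would invoke \cref{WLP_extns}: we are in the regime $\dim_\k A_1 > \dim_\k A_2$, and if WLP holds in degree $1$ then, since $\mu_1$ is surjective, \cref{WLP_extns} (with $i=1$) gives surjectivity of $\mu_j$ for all $j \ge 1$; combined with injectivity of $\mu_0$ (multiplication $A_0 \to A_1$ by a nonzero linear form is always injective), WLP holds in every degree. I should double-check the boundary case where $\dim_\k A_1 = \dim_\k A_2$, i.e. $n = |E|$: this falls under (i) (the hypothesis is $\ge$), and there $b_G = 0$ forces every component to be non-bipartite with $m_i = n_i$, consistent with (ii)'s description; it is worth a remark that the two cases agree on the overlap but no separate argument is needed.

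The main obstacle is not any single hard step but making sure the componentwise bookkeeping in part (ii) is airtight: specifically, justifying that $\rank I(G)$ splits as a sum over components (clear, since the incidence matrix is block diagonal after reordering), that \cref{rk} correctly gives $n_i - 1$ versus $n_i$ per component, and — the one genuinely substantive combinatorial input — that a connected non-bipartite graph has at least as many edges as vertices (equivalently, contains a cycle, since a non-bipartite graph is not a forest). Everything else is rank arithmetic. I would present the proof by first stating the rank reduction, then handling (i) in one line, then doing the component decomposition for (ii), and finally citing \cref{WLP_extns} for the "all degrees" claim.
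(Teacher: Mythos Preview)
Your proposal is correct and follows essentially the same approach as the paper: both reduce to the identity $\rank[\mu_1]=n-b_G$ via \cref{trnsp} and \cref{rk}, dispatch (i) by $n-b_G=n\iff b_G=0$, and handle (ii) by decomposing $n-b_G=|E|$ componentwise into a sum of nonpositive (equivalently, your nonnegative) terms that must each vanish, then cite \cref{WLP_extns}. Your componentwise rank bookkeeping is slightly more explicit than the paper's, but the argument is the same.
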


\begin{proof}
  
By \cref{trnsp}, WLP holds in degree $1$ if and only if $[\mu_1]$ and hence, the incidence matrix $I(G)$ of $G(\D)$ has maximal rank. Note that $n = |V| = \dim_\Bbbk \A_1$ and $|E| = \dim_\Bbbk \A_2$. Hence,  WLP holds in degree $1$ if and only if 
    $$\rank(I(G)) = \min\{\dim_\Bbbk \A_1,\ \dim_\Bbbk \A_2 \}$$ 
    Also, from \cref{rk}, $\rank(I(G)) = n - b_G$
    where $b_G$ is the number of connected bipartite components of $G(\D)$.\\
    We consider the following two cases - 
    \begin{enumerate}
        \item[(i)] $\dim_\Bbbk \A_2 \ge \dim_\Bbbk \A_1:$ Hence, $\rank(I(G)) = \dim_\Bbbk \A_1$, i.e.,  $n - b_G = n$,
           that is to say, $G$ has no bipartite component. 
        
          \item[(ii)]  $\dim_\Bbbk \A_2 < \dim_\Bbbk \A_1:$ Hence, $\rank(I(G)) = \dim_\Bbbk \A_2$, i.e., $|V| - b_G = |E| \iff |V|- |E|=b_G$, i.e.,
            \begin{align*}
               \smashoperator{\sum_{C:\text{ bipartite component}}}\left(|V_C| - |E_C| - 1\right) \ + \ \smashoperator{\sum_{C:\text{ nonbipartite component}}} \left(|V_C| - |E_C|\right)   & = 0
            \end{align*}
        
       \noindent where $|V_C|$ and $|E_C|$ are the number of vertices and edges respectively in the connected component $C$ of the graph $G(\D)$.
             
        For a connected bipartite component $C$, $|E_C| \ge |V_C| - 1$. For each nonbipartite connected component $C$, since it cannot be a tree (as a tree is bipartite), $|E_C| \ge |V_C|$. Since each term in the above sum is non-positive, they become equal to zero. Thus, each bipartite component of $G(\D)$ has $|E_C| = |V_C| - 1$ and so, has to be a tree. Each nonbipartite component of $G(\D)$ satisfies $|E_C| = |V_C|$. \\
        Further, in this case, by \cref{WLP_extns}, if WLP holds in degree $1$, then WLP holds in all degrees.
        \end{enumerate}
           \end{proof}
           
\begin{example} \label{n-path}
  Let $I_\D$ be the edge ideal of a path on $n$ vertices and $\A = S/\la x_1^2, \ldots, x_n^2, I_\D \ra$. $\D$ is the independence complex of the path on $n$-vertices. Let $G(\D)$ be the $1$-skeleton of $\D$. For small values of $n$, it is easy to observe that  WLP holds in degree $1$ for $\A$. For $n \ge 5$, we have $\dim_\Bbbk \A_2 \ge \dim_\Bbbk \A_1$ (since $|E| \ge |V|$) and $G(\D)$ is connected and admits a triangle (an odd cycle), thus, having no bipartite components. Hence,  WLP holds in degree $1$ for all $n$.\\
  
  Given below is $G(\D)$ for $n=5$:

\tikzset{every picture/.style={line width=0.75pt}} 

\begin{tikzpicture}[x=0.75pt,y=0.75pt,yscale=-1,xscale=1]

\draw   (482,115) -- (522,172) -- (442,172) -- cycle ;
\draw    (482,115) -- (532,115) ;
\draw    (532,115) -- (563,147) ;
\draw    (563,147) -- (522,172) ;
\draw    (87,146) -- (139,160) ;
\draw    (42,163) -- (87,146) ;
\draw    (139,160) -- (196,138) ;
\draw    (196,138) -- (244,158) ;

\draw    (481, 102) circle [x radius= 13.6, y radius= 13.6]   ;
\draw (475,94.4) node [anchor=north west][inner sep=0.75pt]    {$1$};
\draw    (575, 153) circle [x radius= 13.6, y radius= 13.6]   ;
\draw (569,145.4) node [anchor=north west][inner sep=0.75pt]    {$2$};
\draw    (434, 183) circle [x radius= 13.6, y radius= 13.6]   ;
\draw (428,175.4) node [anchor=north west][inner sep=0.75pt]    {$3$};
\draw    (538, 102) circle [x radius= 13.6, y radius= 13.6]   ;
\draw (532,94.4) node [anchor=north west][inner sep=0.75pt]    {$4$};
\draw    (524, 185) circle [x radius= 13.6, y radius= 13.6]   ;
\draw (518,177.4) node [anchor=north west][inner sep=0.75pt]    {$5$};
\draw    (32.5, 172.5) circle [x radius= 13.73, y radius= 13.73]   ;
\draw (27,164) node [anchor=north west][inner sep=0.75pt]   [align=left] {1};
\draw    (86.5, 132.5) circle [x radius= 13.73, y radius= 13.73]   ;
\draw (81,124) node [anchor=north west][inner sep=0.75pt]   [align=left] {2};
\draw    (139.5, 174.5) circle [x radius= 13.73, y radius= 13.73]   ;
\draw (134,166) node [anchor=north west][inner sep=0.75pt]   [align=left] {3};
\draw    (194.5, 124.5) circle [x radius= 13.73, y radius= 13.73]   ;
\draw (189,116) node [anchor=north west][inner sep=0.75pt]   [align=left] {4};
\draw    (251.5, 169.5) circle [x radius= 13.73, y radius= 13.73]   ;
\draw (246,161) node [anchor=north west][inner sep=0.75pt]   [align=left] {5};
\draw (83,207) node [anchor=north west][inner sep=0.75pt]   [align=left] {Path on $\displaystyle 5$ vertices};
\draw (458,206) node [anchor=north west][inner sep=0.75pt]   [align=left] {$\displaystyle G( \Delta )$};

\end{tikzpicture}

\end{example}

We look at an interesting consequence of \cref{deg1} below.

\begin{definition}
Given a graph $G(V,E)$, any $C \subseteq V$ is a \emph{clique} of $G$ if every pair of vertices in $C$ are adjacent and $\max\{|C| : C \subseteq V \text{ is a clique}\}$ is called the \emph{clique number} of $G$, denoted by  $\omega(G)$.
\end{definition}  
        
\begin{lemma}\label{cliq} 
         $\soc(\A) \le \omega(G(\D))$.
\end{lemma}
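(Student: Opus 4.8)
The plan is to connect the socle degree of $A(\D)$ with the combinatorics of its $1$-skeleton $G(\D)$. By \cref{inv_A}(iv), we have $\soc(\A) = \dim(\D)+1$, so a facet of $\D$ of dimension $d = \dim(\D)$ consists of $d+1 = \soc(\A)$ vertices, and every pair among those vertices spans a $1$-face of $\D$ (since $\D$ is closed under taking subsets). Hence these $d+1$ vertices form a clique in $G(\D)$, which immediately gives $\soc(\A) = d+1 \le \omega(G(\D))$.

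Concretely, I would first invoke \cref{inv_A}(iv) to record $\soc(\A) = \dim(\D)+1$, and fix a facet $F$ of $\D$ with $|F| = \dim(\D)+1$ realizing this dimension. Then I would observe that because $\D$ is a simplicial complex, every subset of $F$ lies in $\D$; in particular every $2$-element subset $\{x_i, x_j\} \subseteq F$ is an edge of $G(\D)$. This means the vertex set $F$ induces a complete subgraph of $G(\D)$, i.e. $F$ is a clique of size $\dim(\D)+1$. By the definition of the clique number, $\omega(G(\D)) \ge |F| = \dim(\D)+1 = \soc(\A)$, which is the claim.

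There is essentially no obstacle here: the argument is a direct translation between the facet structure of $\D$ and cliques of its $1$-skeleton. The only point that requires a word of care is that a top socle monomial corresponds precisely to a facet of maximal dimension (not merely any maximal face), which is exactly what \cref{inv_A}(iv) provides; once that is cited, the containment of all $2$-subsets of the facet in $\D$ is automatic from the definition of a simplicial complex. I would keep the proof to two or three sentences.
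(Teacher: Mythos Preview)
Your proposal is correct and follows essentially the same argument as the paper's own proof: take a face $F$ of $\D$ realizing the top socle degree, observe that all its $2$-subsets are edges of $G(\D)$, and conclude that $F$ is a clique of size $\soc(\A)$. The only cosmetic difference is that the paper phrases this in terms of quadrics dividing the top socle monomial $x_F$ and cites \cref{inv_A}(i), whereas you cite \cref{inv_A}(iv) and speak directly of subsets of the facet; the content is identical.
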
        
\begin{proof}
          Suppose $x_F = x_{i_1} x_{i_2} \cdots x_{i_j}$ is a top socle of $A(\D)$. By part $(i)$ of \cref{inv_A}, $F$ is a face of $\D$ of dimension $(j-1)$. For every quadric $x_{i_k}x_{i_t}$ that divides $x_F$, there is a corresponding edge in $G(\D)$. Thus, $\{x_{i_1}, x_{i_2},\ldots, x_{i_j}\}$ is a clique of $G(\D)$ and hence, $\soc(\A) = \deg(x_F) \le \omega(G(\D))$.
         
\end{proof}

\begin{corollary} \label{soc_3}
         When $A(\D)$ has  WLP in degree $1$, if $\dim_\Bbbk \A_2 \le \dim_\Bbbk \A_1$, then $\soc(A(\D))$ is at most $3$.
\end{corollary}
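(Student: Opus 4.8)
The plan is to combine \cref{deg1} with the clique bound in \cref{cliq}. Since we are assuming WLP holds in degree $1$ and $\dim_\Bbbk \A_2 \le \dim_\Bbbk \A_1$, I first want to extract structural information about the $1$-skeleton $G(\D)$. If $\dim_\Bbbk \A_2 < \dim_\Bbbk \A_1$, then part (ii) of \cref{deg1} applies directly: every bipartite component of $G(\D)$ is a tree and every non-bipartite component $C$ satisfies $|E_C| = |V_C|$, i.e. it has exactly one cycle. The boundary case $\dim_\Bbbk \A_2 = \dim_\Bbbk \A_1$ needs a small separate check — here part (i) of \cref{deg1} says $G(\D)$ has no bipartite components, and since $|E| = |V|$ with no component being a tree (trees are bipartite), each component $C$ has $|E_C| \ge |V_C|$, forcing $|E_C| = |V_C|$ for every component; so again every component has exactly one cycle. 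Either way, the conclusion is: every connected component of $G(\D)$ is either a tree or has exactly one independent cycle ("unicyclic").

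Next I would bound the clique number. In a tree there is no triangle, so the largest clique has size $\le 2$. In a unicyclic graph (one with $|E_C| = |V_C|$), the unique cycle is the only cycle; a clique on $4$ or more vertices $K_4$ contains several distinct cycles, and even $K_3$ uses up the single cycle entirely — but more to the point, a graph containing $K_4$ as a subgraph would have at least $6$ edges among $4$ vertices, so any component containing it has $|E_C| \ge |V_C| + 2$, contradicting $|E_C| = |V_C|$. Hence no component contains $K_4$, so $\omega(G(\D)) \le 3$. Applying \cref{cliq}, $\soc(\A) \le \omega(G(\D)) \le 3$, which is exactly the claim.

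The one step that requires a little care rather than being purely routine is the boundary case $\dim_\Bbbk \A_2 = \dim_\Bbbk \A_1$, since \cref{deg1} is phrased with a strict inequality in part (ii) and only the "no bipartite component" conclusion in part (i); one must redo the counting argument $\sum_C(|V_C| - |E_C|) = 0$ with all terms nonpositive (because no component is a tree) to conclude each component is unicyclic. I do not expect any genuine obstacle here — the counting is identical in spirit to the proof of \cref{deg1}(ii). An alternative, cleaner route is simply to observe that in all cases $\dim_\Bbbk \A_2 \le \dim_\Bbbk \A_1$ gives $|E| \le |V|$, hence $\sum_C |E_C| \le \sum_C |V_C|$, and since each component individually satisfies $|E_C| \ge |V_C| - 1$ with equality only for trees, the only way WLP (equivalently, maximal rank of the incidence matrix, equivalently $|V| - b_G = |E|$) can hold is if every component is a tree or unicyclic; then the $K_4$-free argument above finishes it. I would present the argument in this streamlined form to avoid splitting into the two sub-cases of \cref{deg1}.
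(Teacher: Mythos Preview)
Your proposal is correct and follows essentially the same strategy as the paper: extract the structure of $G(\Delta)$ from \cref{deg1}, bound the clique number by $3$, and apply \cref{cliq}. You are in fact more careful than the paper in handling the boundary case $\dim_\Bbbk \A_2 = \dim_\Bbbk \A_1$, which falls under part (i) rather than part (ii) of \cref{deg1} and needs the small counting argument you supply; the paper's proof cites only part (ii) and glosses over this.
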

\begin{proof}
          When  WLP holds in degree $1$, if $\dim_\Bbbk \A_2 \le \dim_\Bbbk \A_1$, from part $(ii)$ of \cref{deg1}, we see that $G(\D)$ may have bipartite components, in which case the maximum possible clique size is $2$, or every connected nonbipartite component has the property that the number of edges in the component is equal to the number of vertices in the component, allowing clique size at most $3$ since for any clique of size greater than $3$, number of edges is greater than the number of vertices. Hence, the clique number of $G(\D)$ and by \cref{cliq}, $\soc(A(\D))$  is at most $3$.
\end{proof}

  \section{Weak Lefschetz Property in degree $d$ for Pseudomanifolds}
     In this section, we look at the Weak Lefschetz Property in degree $d$ of $\A$ corresponding to the $d$-dimensional pseudomanifold $\D$.    
        \begin{definition} \label{psd_man}
A combinatorial $d$-dimensional \emph{pseudomanifold} is a simplicial complex such that :\\
$(i)$ each facet is a $d$-simplex.\\
$(ii)$ every $(d-1)$-simplex is a face of at least one and at most two $d$-simplices for $d > 1$.\\
$(iii)$ given any two $d$-simplices $F_1, F_k$, there exists a chain of $d$-simplices $F_1, F_2, F_3, \ldots, F_k$ such that $F_i \cap F_{i+1}$ is a $(d-1)$-simplex for $1 \le i \le k-1$.
\end{definition}

We refer to a $d$-dimensional pseudomanifold to be one with boundary if there exists at least one \emph{boundary $(d-1)$-simplex}, i.e., a $(d-1)$-simplex that is a face of exactly one $d$-simplex. We refer to a $d$-simplex having a boundary $(d-1)$-simplex as a \emph{boundary $d$-simplex}. A $d$-dimensional pseudomanifold having no boundary $d$-simplex is called a \emph{pseudomanifold without boundary}.

Given a pure $d$-dimensional complex $\Delta$, any face of dimension $(d-1)$ is called a \emph{ridge}. The \emph{dual graph} $G^*(\D)$ of a pure complex $\Delta$ is the graph in which every facet of $\Delta$ becomes a vertex and two vertices in $G^*(\D)$ are adjacent (i.e., are connected by an edge) if and only if the corresponding facets share a ridge.

\begin{remark}\label{rid-fac}
For $\A$ corresponding to a $d$-dimensional pseudomanifold $\D$,\\
$\dim_\k \A_d = $ number of ridges and $\dim_\k \A_{d+1}= $ number of $d$-simplices. Further, $\A$ is a level algebra.
\end{remark}

\subsection{Pseudomanifold without boundary} 

\begin{theorem}\label{psd_wo_bdy}
For $d \ge 1$, $\A$ corresponding to a $d$-dimensional pseudomanifold without boundary has  WLP in degree $d$ if and only if the dual graph is not bipartite.
\end{theorem}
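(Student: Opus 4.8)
The plan is to reduce WLP in degree $d$ to a rank computation for the multiplication map $\mu_d \colon \A_d \to \A_{d+1}$, and then identify the relevant matrix with a combinatorial incidence matrix of the dual graph, exactly mirroring the degree-$1$ argument via \cref{trnsp} and \cref{rk}. First I would record, using \cref{rid-fac}, that $\dim_\k \A_d$ equals the number of ridges $R$ and $\dim_\k \A_{d+1}$ equals the number of facets $F$ (the $d$-simplices). For a pseudomanifold without boundary, every ridge lies in exactly two facets, so each row of $[\mu_d]$ (indexed by ridges, in the chosen order) has exactly two nonzero entries, each equal to $1$: if $E$ is a ridge contained in facets $G_1, G_2$, then $\mu_d(x_E) = x_{G_1} + x_{G_2}$. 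Hence $[\mu_d]$ is precisely the (unsigned) vertex-edge incidence matrix of the dual graph $G^*(\D)$, with "vertices" the facets and "edges" the ridges — the transpose situation to \cref{trnsp}, but the rank is the same for a matrix and its transpose.

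Next I would invoke \cref{rk}: the rank of the incidence matrix of $G^*(\D)$ over a characteristic-$0$ field is $F - b_{G^*}$, where $b_{G^*}$ is the number of bipartite connected components of $G^*(\D)$. By condition (iii) in \cref{psd_man}, the dual graph of a pseudomanifold is connected, so $b_{G^*}$ is either $0$ (if $G^*(\D)$ is not bipartite) or $1$ (if it is). Therefore $\rank [\mu_d] = F$ when $G^*(\D)$ is not bipartite, and $\rank [\mu_d] = F - 1$ when it is bipartite. WLP in degree $d$ means $\mu_d$ has maximal rank, i.e.\ rank $= \min\{R, F\}$. Since every facet contains $d+1$ ridges and every ridge lies in exactly two facets, double counting gives $(d+1)F = 2R$, so $R \ge F$ (with equality only when $d = 1$, where the "pseudomanifold without boundary" is a disjoint union — here a single cycle — of cycles); in all cases $\min\{R,F\} = F$. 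Thus $\mu_d$ has maximal rank if and only if $\rank[\mu_d] = F$, which holds if and only if $G^*(\D)$ is not bipartite. This is exactly the claimed equivalence.

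The one genuine subtlety — and the step I expect to need the most care — is the $d = 1$ case, where "ridge" means vertex and "facet" means edge, so the dual graph construction degenerates somewhat and one should check directly that a $1$-dimensional pseudomanifold without boundary is a single cycle $C_m$ (connectedness from (iii), every vertex in exactly two edges from (ii)); then $R = F = m$, $\min\{R,F\} = m$, and the dual graph of $C_m$ is again $C_m$, which is bipartite iff $m$ is even. So WLP in degree $1$ fails exactly for even cycles, consistent with the general statement. I would also double-check the edge case of the incidence-matrix identification when $d \ge 2$: a ridge could in principle lie in only one facet, but that is excluded precisely because we assume no boundary, which is why the hypothesis is essential and why the "with boundary" case is treated separately in \cref{psd_with_bdy}. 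With these checks in place the proof is a short assembly of \cref{rid-fac}, the incidence-matrix identification, \cref{rk}, and the double-counting inequality $R \ge F$.
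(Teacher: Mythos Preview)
Your proof is correct and follows essentially the same route as the paper: identify $[\mu_d]$ with the incidence matrix of the dual graph $G^*(\D)$, use the double-counting identity $(d+1)F = 2R$ to get $R \ge F$, and then apply the rank formula from \cref{rk} together with the connectedness of $G^*(\D)$ (from \cref{psd_man}(iii)) to conclude. The only cosmetic difference is that the paper packages the final step as an appeal to part (i) of \cref{deg1} rather than invoking \cref{rk} directly, and your treatment of the $d=1$ edge case is more explicit than the paper's.
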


\begin{proof}
For $d \ge 1$, let $\Delta$ be a $d$-dimensional pseudomanifold without boundary, i.e., every ridge is the face of exactly two $d$-simplices and every $d$-simplex contains $(d+1)$ many ridges. Then,
\begin{equation}\label{psd_dim}
\dim_{\Bbbk}A(\D)_d= \left(\frac{d+1}{2}\right) \cdot \dim_{\Bbbk}A(\D)_{d+1}
\end{equation}
Hence, $\dim_{\Bbbk} A(\D)_d\ge \dim_{\Bbbk} A(\D)_{d+1}$.

Now, consider the dual graph $G^*(\D)$ of $\D$. The number of vertices in $G^*(\D)$, $|V_{G^*(\D)}|$, equals the number of $d$-simplices of $\D$, and the number of edges in $G^*(\D)$, $|E_{G^*(\D)}|$, equals the number of ridges of $\D$. The map $\mu_d:A(\D)_d \to A(\D)_{d + 1}$ then gives the transpose of the map $\mu_1:V_{G^*(\D)} \to  E_{G^*(\D)}$ and hence, by \cref{trnsp},  the corresponding matrix is the incidence matrix $I(G^*(\D))$. So, $\mu_d$ has full rank if and only if $I(G^*(\D))$ has full rank. Note that here, by  \cref{psd_dim},
$$|E_{G^*(\D)}|  \ge |V_{G^*(\D)}|$$
Hence, by part $(i)$ of \cref{deg1}, we see that $\mu_d$ has full rank if and only if the connected graph $G^*(\D)$ is not bipartite.
\end{proof}

\begin{example}
Let $\D$ be the octahedron ($2$-dimensional pseudomanifold without boundary). For the corresponding $\A$,  WLP fails in degree $2$.

\tikzset{every picture/.style={line width=0.75pt}} 

\begin{tikzpicture}[x=0.75pt,y=0.75pt,yscale=-1,xscale=1]

\draw  [fill={rgb, 255:red, 98; green, 219; blue, 192 }  ,fill opacity=1 ] (196.15,108) -- (183.74,195.55) -- (119.51,195.55) -- cycle ;
\draw  [fill={rgb, 255:red, 240; green, 203; blue, 143 }  ,fill opacity=1 ] (196.15,108) -- (268,195.62) -- (184.62,195.62) -- cycle ;
\draw  [fill={rgb, 255:red, 199; green, 143; blue, 211 }  ,fill opacity=1 ] (184.49,271.49) -- (119.51,195.05) -- (184.62,195.11) -- cycle ;
\draw  [fill={rgb, 255:red, 169; green, 200; blue, 236 }  ,fill opacity=1 ] (185.47,271.45) -- (184.62,195.12) -- (267.99,195.62) -- cycle ;
\draw  [dash pattern={on 4.5pt off 4.5pt}]  (118.75,195.04) -- (199.7,180.41) ;
\draw  [dash pattern={on 4.5pt off 4.5pt}]  (196.15,116.69) -- (199.7,180.41) ;
\draw  [dash pattern={on 4.5pt off 4.5pt}]  (199.7,180.41) -- (268,195.62) ;
\draw  [dash pattern={on 4.5pt off 4.5pt}]  (199.7,180.41) -- (184.49,271.49) ;
\draw   (423.06,154.35) -- (464.41,113) -- (560.9,113) -- (560.9,224.62) -- (519.55,265.97) -- (423.06,265.97) -- cycle ; \draw   (560.9,113) -- (519.55,154.35) -- (423.06,154.35) ; \draw   (519.55,154.35) -- (519.55,265.97) ;
\draw    (464.41,113) -- (459.86,221.65) ;
\draw    (459.86,221.65) -- (560.9,223.79) ;
\draw    (423.06,265.97) -- (459.86,221.65) ;

\draw (124.97,270.48) node [anchor=north west][inner sep=0.75pt]   [align=left] {$\displaystyle \Delta $ is the octahedron};
\draw (378.5,269.39) node [anchor=north west][inner sep=0.75pt]   [align=left] {$\displaystyle G^{*}( \Delta )$ is the edge graph of a cube};

\end{tikzpicture}

This can be observed from the fact that the dual graph of the octahedron is the $1$-skeleton (edge graph) of a cube, and is bipartite.\\
\end{example}

\subsection{Pseudomanifold with boundary}~\\

For $d \ge 1$, let $\Delta$ be a $d$-dimensional pseudomanifold with boundary, i.e., there exists at least one boundary $(d-1)$-simplex. 

\begin{theorem} \label{psd_with_bdy}
         WLP in degree $d$ always holds for $\A$ corresponding to a $d$-dimensional pseudomanifold $\D$ with boundary.
\end{theorem}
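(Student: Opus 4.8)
The strategy is to show that the map $\mu_d : A(\D)_d \to A(\D)_{d+1}$ is surjective, which suffices for WLP in degree $d$ by \cref{inv_A}(iii) (we only need maximal rank, and here surjectivity is the natural target since the presence of a boundary ridge should make the source ``large''). As in the proof of \cref{psd_wo_bdy}, I would first translate everything to the dual graph $G^*(\D)$: by \cref{rid-fac} the vertices of $G^*(\D)$ are the $d$-simplices of $\D$, the edges of $G^*(\D)$ correspond to the \emph{interior} ridges (those contained in two $d$-simplices), and $\dim_\k A(\D)_{d+1} = |V_{G^*(\D)}|$ while $\dim_\k A(\D)_d$ equals the number of ridges, which is $|E_{G^*(\D)}|$ plus the number $\beta$ of boundary ridges. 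By \cref{trnsp} (applied to $G^*(\D)$, which connected by \cref{psd_man}(iii)), the matrix $[\mu_d]$ is, up to the extra rows coming from boundary ridges, the transpose of the incidence matrix $I(G^*(\D))$. Concretely, each boundary ridge $R$ contained in the unique $d$-simplex $F$ contributes a row of $[\mu_d]$ with a single nonzero entry (a $1$ in the column indexed by $F$); so $[\mu_d]^{T}$ is the incidence matrix $I(G^*(\D))$ augmented with $\beta$ extra columns, each a standard basis vector $e_F$ for some facet $F$ adjacent to the boundary.

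The key point is then a rank computation: I claim $\rank [\mu_d] = |V_{G^*(\D)}|$, i.e.\ the augmented incidence matrix has full column rank equal to the number of facets, which forces $\mu_d$ to be surjective. By \cref{rk}, $\rank I(G^*(\D)) = |V_{G^*(\D)}| - b_{G^*(\D)}$, and since $G^*(\D)$ is connected this is $|V_{G^*(\D)}|$ when $G^*(\D)$ is non-bipartite (done immediately in that case), or $|V_{G^*(\D)}| - 1$ when $G^*(\D)$ is bipartite. In the bipartite case I need to show that adjoining even one standard basis vector $e_F$ raises the rank by one. This is a standard fact: for a connected bipartite graph with parts $(X,Y)$, the left kernel of $I(G^*(\D))$ (equivalently the kernel of $I(G^*(\D))^T$ acting on row space) is spanned by the single vector that is $+1$ on $X$ and $-1$ on $Y$ (this is exactly the cokernel statement dual to \cref{rk}); appending a column $e_F$ with $F \in X$ (say) destroys this relation because the inner product of $e_F$ with the sign vector is $\pm 1 \neq 0$. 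Hence the augmented matrix has rank $|V_{G^*(\D)}|$, and since a pseudomanifold with boundary has $\beta \ge 1$, we always gain this extra rank. Therefore $[\mu_d]$ has rank $\dim_\k A(\D)_{d+1}$, so $\mu_d$ is surjective and WLP holds in degree $d$.

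One subtlety to handle carefully is the low-dimensional/degenerate cases: when $d=1$ the notion of ``ridge'' is a vertex, the dual graph is the graph $G(\D)$ itself, and condition \cref{psd_man}(ii) is only imposed for $d>1$, so I should check that the argument (or a direct appeal to \cref{deg1}) still covers $d=1$ pseudomanifolds with boundary — indeed a $1$-dimensional pseudomanifold with boundary is a path or more generally a connected graph with a degree-$1$ vertex, and part (ii) of \cref{deg1} (or simply the augmented-incidence-matrix argument) applies. I should also note that there may be more than one boundary ridge, and the extra columns $e_F$ may repeat or attach to the same facet; this is harmless since adding one good column already achieves full rank and extra columns cannot decrease it. The main obstacle, and the only place needing genuine care, is the bipartite-case rank bump: making precise that the cokernel of the incidence matrix of a connected bipartite graph is one-dimensional and spanned by the $\pm1$ sign vector, and that $e_F$ is not orthogonal to it — everything else is bookkeeping translating the multiplication map to $G^*(\D)$ via \cref{trnsp} and \cref{rid-fac}.
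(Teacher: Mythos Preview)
Your proof is correct and takes a genuinely different route from the paper's. The paper proves surjectivity of $\mu_d$ by an explicit construction: given any facet $F$, it uses the chain condition \cref{psd_man}(iii) to connect $F$ to a boundary facet $F_k$ via a path $F=F_1,\dots,F_k$ in the dual graph, and then writes down a telescoping linear combination of the intermediate ridges $B_i=F_i\cap F_{i+1}$ together with a boundary ridge $B_k\subset F_k$ whose image under $\mu_d$ is exactly $x_F$. Your argument is instead a rank computation on the augmented incidence matrix of $G^*(\D)$: you identify $[\mu_d]$ (up to a row/column convention that is the reverse of the paper's, but harmless) with $I(G^*(\D))$ together with one standard-basis column per boundary ridge, and show that in the only nontrivial (bipartite) case the extra column kills the one-dimensional cokernel spanned by the bipartition sign vector. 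The paper's approach is more elementary and self-contained---it avoids invoking \cref{rk} or the explicit description of the cokernel---while yours has the virtue of unifying \cref{psd_wo_bdy} and \cref{psd_with_bdy} under a single incidence-matrix framework, making transparent exactly why the presence of even one boundary ridge flips the answer. One small caveat: your identification of interior ridges with edges of $G^*(\D)$ uses condition (ii) of \cref{psd_man}, which is only stated for $d>1$; the $d=1$ case does need to be handled separately, as you note.
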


\begin{proof}
 We use the definition of  WLP to prove this theorem by showing that $\mu_d:A(\D)_d\to A(\D)_{d+1}$ is surjective. We use the notation $x_F$ for the monomial in $\A$ corresponding to the face $F \in \D$.  \\
For a boundary $d$-simplex $F'$, any boundary ridge $B'$ such that $B' \subseteq F'$ gives $\mu_d ( x_{B'}) = x_{F'}$. Now we consider a $d$-simplex $F$ which is not a boundary facet. By part $(iii)$ in \cref{psd_man}, we get some boundary $d$-simplex $F_k$, and a chain of $d$-simplices $F = F_1, F_2, F_3, \ldots, F_k$ such that $F_i \cap F_{i+1} = B_{i}$ is a ridge for $1 \le i \le k-1$. Let $B_k$ be a boundary ridge of $F_k$. \\
 We have $\mu_d ( x_{B_k})  = x_{F_k}$ and $\mu_d ( x_{B_i} ) = x_{F_i} + x_{F_{i+1}}$ for $1 \le i \le k-1$.
 
 Now, define 
\begin{align*}
             x_{M_1} & := x_{B_k}\\
             x_{M_j} & := x_{B_{k-j+1}} - x_{M_{j-1}} \text{ for } 2 \le j \le k\\
         \end{align*}
 Then,
    $\mu_d ( x_{M_1} )   = x_{F_k}$,
   $ \mu_d ( x_{M_2} )   = x_{F_{k-1}}$ and continuing this way, 
    $\mu_d ( x_{M_k} )   = x_{F_1} = x_F$.
Hence, $\mu_d$ is surjective.
\end{proof}

We now look at some interesting corollaries of the above results. 

\begin{corollary}\label{dim2_charc}
Let $\D$ be a $2$-dimensional pseudomanifold. $\A$ has  WLP if and only if $\D$ has boundary or if $\D$ has no boundary, the dual graph $G^*(\D)$ is not bipartite.
\end{corollary}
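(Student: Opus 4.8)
The plan is to deduce \cref{dim2_charc} directly from the two preceding theorems, \cref{psd_wo_bdy} and \cref{psd_with_bdy}, together with the observation that for a $2$-dimensional pseudomanifold WLP in degree $d = 2$ is the only nontrivial degree to check. First I would recall that $\D$ is a $2$-dimensional complex, so $\soc(\A) = \dim(\D) + 1 = 3$ by \cref{inv_A}(iv), and the Hilbert function is $(1, f_0, f_1, f_2)$ concentrated in degrees $0,1,2,3$. WLP requires maximal rank of $\mu_0 \colon \A_0 \to \A_1$, $\mu_1 \colon \A_1 \to \A_2$, and $\mu_2 \colon \A_2 \to \A_3$. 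The map $\mu_0$ is injective (multiplication by $\ell$ sends $1$ to $\ell \neq 0$ in $\A_1$), so it always has maximal rank. For $\mu_1$, the $1$-skeleton $G(\D)$ of a pseudomanifold of dimension $2$ is connected (by the chain condition in \cref{psd_man}(iii), any two facets are linked, hence all vertices lie in one component) and contains a triangle (any facet), so $G(\D)$ is a single non-bipartite component; since every edge of $\D$ lies in a facet, every edge lies in a triangle, and one checks $\dim_\k \A_2 = f_1 \ge f_0 = \dim_\k \A_1$ (e.g. because $3 f_2 = 2 f_1$ for a pseudomanifold without boundary gives $f_1 \ge f_2$, and more simply each vertex has degree $\ge 2$ in $G(\D)$, forcing $2|E| = \sum \deg \ge 2|V|$). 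Hence by \cref{deg1}(i), $\mu_1$ has maximal rank. Therefore WLP for $\A$ is \emph{equivalent} to WLP holding in degree $d = 2$, i.e. to $\mu_2$ having maximal rank.

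Next I would invoke the two structural theorems to resolve degree $2$. If $\D$ has boundary, then by \cref{psd_with_bdy}, $\mu_2$ is surjective, so WLP in degree $2$ holds, hence WLP holds for $\A$. If $\D$ has no boundary, then by \cref{psd_wo_bdy}, WLP holds in degree $2$ if and only if the dual graph $G^*(\D)$ is not bipartite. Combining the two cases: $\A$ has WLP if and only if ($\D$ has boundary) or ($\D$ has no boundary and $G^*(\D)$ is not bipartite), which is exactly the asserted statement.

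The only genuine point requiring care — and the part I expect to be the main obstacle, such as it is — is justifying that degrees $0$ and $1$ are automatic, i.e. that for every $2$-dimensional pseudomanifold the $1$-skeleton $G(\D)$ is connected, non-bipartite, and satisfies $\dim_\k \A_2 \ge \dim_\k \A_1$, so that \cref{deg1}(i) applies and returns "WLP in degree $1$ holds." Connectedness and the presence of an odd cycle follow from the pseudomanifold axioms; the inequality on dimensions follows from a short edge-count. Once these are in place, \cref{dim2_charc} is a formal consequence of \cref{psd_wo_bdy} and \cref{psd_with_bdy}. I would write the proof as: reduce WLP to degree $2$ via the remarks above, then split into the two cases (boundary / no boundary) and cite the corresponding theorem in each.
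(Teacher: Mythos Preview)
Your proposal is correct and follows essentially the same approach as the paper's own proof: reduce WLP to degrees $1$ and $2$, dispatch degree $1$ via \cref{deg1}(i) by observing that $G(\D)$ is connected, contains a triangle, and satisfies $f_1 \ge f_0$, and then handle degree $2$ by splitting into the boundary case (\cref{psd_with_bdy}) and the no-boundary case (\cref{psd_wo_bdy}). The only cosmetic differences are that you explicitly mention $\mu_0$ and phrase the inequality $f_1 \ge f_0$ via the handshaking lemma, whereas the paper writes it as $(k/2)\dim_\k \A_1 = \dim_\k \A_2$ with $k \ge 2$.
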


\begin{proof}
Let $\D$ be a $2$-dimensional pseudomanifold. Note that here, we only need to check for  WLP in degrees $1$ and $2$. Firstly, for degree $1$, the $1$-skeleton $G(\D)$ has only one connected component. Further, each vertex is incident with at least two edges and each edge is incident with two vertices. Thus,
\begin{equation*}
\left(\frac{k}{2}\right) \cdot \dim_{\Bbbk}A(\D)_1  = \dim_{\Bbbk}A(\D)_{2}
\end{equation*}
where $k \ge 2$. Hence, $\dim_{\Bbbk} A(\D)_2 \ge \dim_{\Bbbk} A(\D)_{1}$. Since a $2$-dimensional pseudomanifold has triangular facets making $G(\D)$ nonbipartite, by part $(i)$ of \cref{deg1},  WLP holds in degree $1$ for $\A$.\\
Next, in degree $2$, we see that  WLP holds for $\A$ either if $\D$ has boundary (\cref{psd_with_bdy}) or, if $\D$ has no boundary, the dual graph $G^*(\D)$ is not bipartite (\cref{psd_wo_bdy}).
\end{proof}

\begin{definition}
 A simplicial complex $\D$ is a \emph{triangulation} if there exists a homeomorphism between the geometric realization of $\D$ and a topological manifold.
 In the specific case when $\D$ is a $2$-dimensional pseudomanifold, the facets of $\D$ are triangles ($2$-simplices) and the triangulation is said to be \emph{face $2$-colorable} if it is possible to color all the triangles using $2$-colors such that no two adjacent triangles have the same color.
\end{definition}

  \begin{theorem}
        Let $\D$ be a $2$-dimensional pseudomanifold without boundary that is a triangulation. Then,  WLP holds in degree $2$ for the corresponding $A(\D)$, if and only if the triangulation is not face $2$-colorable.
    \end{theorem}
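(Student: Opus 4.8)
The plan is to reduce this statement to \cref{psd_wo_bdy} by identifying ``face $2$-colorable'' with ``the dual graph $G^*(\D)$ is bipartite.'' Indeed, a proper $2$-coloring of the facets of $\D$ is exactly a proper $2$-coloring of the vertices of $G^*(\D)$, since two facets are adjacent in $G^*(\D)$ precisely when they share a ridge, which for a $2$-dimensional pseudomanifold without boundary means they share an edge and are ``adjacent triangles.'' A graph admits a proper $2$-coloring if and only if it is bipartite, so the triangulation is face $2$-colorable if and only if $G^*(\D)$ is bipartite.

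First I would spell out this correspondence carefully: given a face $2$-coloring, assign to each vertex of $G^*(\D)$ the color of the corresponding triangle; adjacency in $G^*(\D)$ forces distinct colors, so this is a proper $2$-coloring, hence $G^*(\D)$ is bipartite. Conversely, a bipartition of $G^*(\D)$ pulls back to a coloring of the triangles in which adjacent triangles receive different colors, i.e., a face $2$-coloring. I should note that $\D$ being a pseudomanifold without boundary guarantees the ridges are shared by exactly two facets, so ``adjacent triangles'' in the colorability definition coincides with ``adjacent in $G^*(\D)$''; and connectivity of $G^*(\D)$ (from \cref{psd_man}(iii)) is not needed for the equivalence, though it is harmless.

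Then I would simply invoke \cref{psd_wo_bdy}: $A(\D)$ has WLP in degree $2$ if and only if $G^*(\D)$ is not bipartite, which by the above is if and only if the triangulation is not face $2$-colorable. That completes the proof.

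I do not expect a serious obstacle here; the only thing to be careful about is making the definitional translation airtight — in particular confirming that in the ``without boundary'' case every ridge lies in exactly two facets, so that the notion of two triangles being ``adjacent'' in the coloring definition is literally the edge relation of $G^*(\D)$, with no ridge left out and no facet self-adjacent. Once that is pinned down, the result is immediate from \cref{psd_wo_bdy}.
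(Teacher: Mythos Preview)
Your proposal is correct and follows essentially the same route as the paper: invoke \cref{psd_wo_bdy} and identify face $2$-colorability with bipartiteness of $G^*(\D)$. In fact, your write-up is more careful than the paper's, which simply asserts the identification in one sentence without spelling out the two directions or the role of the ``without boundary'' hypothesis.
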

        
\begin{proof}
         When $\D$ is a $2$-dimensional pseudomanifold without boundary, by \cref{psd_wo_bdy}, $\A$ has  WLP in degree $2$ if and only if the dual graph has no bipartite components. Since the vertices in the dual graph correspond to triangles in the triangulation, this implies that  WLP holds in degree $2$ if and only if the triangulation is not face $2$-colorable.
\end{proof}

\begin{example}
 A tetrahedron is a triangulation without boundary of the sphere $S^2$. It is clearly not $2$-colorable, hence, the corresponding $\A$ satisfies  WLP in degree $2$. 
\end{example}

\begin{remark}
Face $2$-colorability has some interesting connections with the concept of Gr\"unbaum coloring \cite{LVZ2017}. In particular, from \cite[Theorem 1]{LVZ2017}, we see that if the given triangulation of a closed manifold is not Gr\"unbaum colorable, then, it is not face $2$-colorable and hence, WLP holds in degree $2$.
\end{remark}

\begin{definition}
 A graph is said to be \emph{Eulerian} if and only if it has no vertex of odd degree. A \emph{planar triangulation} is a triangulation that can be embedded in the plane.
\end{definition}

 By \cite[Theorem 1.4(b)]{LLCEHYYZ2019}, \cite[Proposition 2]{HSS2020}, a planar triangulation of a graph $G$ is face $2$-colorable if and only if the graph $G$ is Eulerian. This gives the following corollary.

\begin{corollary}\label{eulerian}
        Let $\D$ be a $2$-dimensional pseudomanifold without boundary that is a planar triangulation. Then,  WLP holds in degree $2$ for the corresponding $A(\D)$, if and only if the $1$-skeleton $G(\D)$ is not Eulerian.
\end{corollary}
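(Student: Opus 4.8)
The plan is to derive \cref{eulerian} by combining the immediately preceding theorem (WLP in degree $2$ holds for a triangulated $2$-dimensional pseudomanifold without boundary precisely when the triangulation is not face $2$-colorable) with the quoted graph-theoretic fact from \cite[Theorem 1.4(b)]{LLCEHYYZ2019} and \cite[Proposition 2]{HSS2020}, so the content is really a matter of checking that the definitions line up. First I would note that the hypotheses agree: a planar triangulation is in particular a triangulation and a $2$-dimensional pseudomanifold without boundary, so the preceding theorem applies verbatim and reduces WLP in degree $2$ for $A(\D)$ to the non-face-$2$-colorability of the triangulation.

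Next I would invoke the cited result: a planar triangulation is face $2$-colorable if and only if its underlying graph $G(\D)$ is Eulerian, i.e., has no vertex of odd degree. Chaining the two equivalences then yields that $A(\D)$ has WLP in degree $2$ $\Longleftrightarrow$ the triangulation is not face $2$-colorable $\Longleftrightarrow$ $G(\D)$ is not Eulerian, which is exactly the assertion.

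The only step requiring genuine care — and where a sloppy argument could slip — is reconciling the terminology between the present paper and the references. I would verify that a $2$-dimensional pseudomanifold without boundary that embeds in the plane is exactly a triangulation of the $2$-sphere (equivalently, that $G(\D)$ is a maximal planar graph), so that ``planar triangulation'' as used in the hypothesis of \cref{eulerian} denotes the same objects studied in \cite{LLCEHYYZ2019, HSS2020}, and that the ``faces'' being $2$-colored there are the $2$-simplices, which is precisely the face $2$-coloring discussed around \cref{dim2_charc} and the proper vertex coloring of the dual graph $G^*(\D)$. I would also point out that the no-boundary hypothesis is not an additional restriction in the planar case, since every edge of such a triangulation lies in exactly two triangles, so \cref{eulerian} is non-vacuous. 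Once this dictionary is in place, the corollary follows with no further argument; I do not expect any substantive obstacle beyond this bookkeeping.
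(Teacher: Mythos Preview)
Your proposal is correct and follows exactly the paper's approach: the paper simply states the cited graph-theoretic fact from \cite{LLCEHYYZ2019, HSS2020} that a planar triangulation is face $2$-colorable iff its graph is Eulerian, and remarks that this immediately yields the corollary from the preceding theorem. Your additional bookkeeping about matching terminology is more careful than what the paper itself provides, but the logical route is identical.
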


 \begin{definition}
   \emph{The first barycentric subdivision} of a triangulation is obtained by introducing $4$ new vertices and $6$ new edges in every triangle as follows - the midpoints of the edges and the centroid of the triangle are the new vertices, and the new edges are created by joining the centroid  with the midpoints of the three edges and the vertices of the triangle.
    \end{definition}

 \begin{corollary} \label{bary}
Let $\D$ be a $2$-dimensional pseudomanifold without boundary that is the first barycentric subdivision of a triangulation. Then, WLP in degree $2$ fails for $\A$.
 \end{corollary}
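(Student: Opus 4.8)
Write $\D=\mathrm{sd}(\D')$, where $\D'$ is the triangulation whose first barycentric subdivision is $\D$. Since a barycentric subdivision does not change the underlying topological space, $|\D'|=|\D|$ is a closed surface, so $\D'$ is itself a $2$-dimensional pseudomanifold without boundary, and every result of Section~4 applies to both $\D$ and $\D'$. By \cref{psd_wo_bdy}, WLP fails in degree $2$ for $\A$ exactly when the dual graph $G^*(\D)$ is bipartite; and $G^*(\D)$ is bipartite exactly when $\D$ is face $2$-colorable, since a face $2$-coloring is precisely a proper $2$-coloring of $G^*(\D)$. Thus the statement reduces to: the first barycentric subdivision of any triangulated $2$-dimensional pseudomanifold without boundary is face $2$-colorable.

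The plan is to analyze $G^*(\mathrm{sd}(\D'))$ locally and then glue. Fix a facet $T=\{a,b,c\}$ of $\D'$. The six triangles of $\mathrm{sd}(\D')$ contained in $T$ are the flags $\{v\}\subset\{v,w\}\subset T$ with $\{v,w\}\subset T$; checking which pairs of these share an edge of $\mathrm{sd}(\D')$ shows they form a $6$-cycle in $G^*(\mathrm{sd}(\D'))$, whose two color classes are exactly the two cyclic orientations of $T$ — the flag $\{v\}\subset\{v,w\}\subset T$ lying in the class corresponding to whichever cyclic orientation traverses the directed edge $v\to w$. Moreover, if $T,T'$ are facets of $\D'$ sharing a ridge $e=\{v,w\}$, then in $\mathrm{sd}(\D')$ the flags $\{v\}\subset e\subset T$ and $\{v\}\subset e\subset T'$ share the edge joining $v$ to the barycenter of $e$, hence are adjacent in $G^*(\mathrm{sd}(\D'))$ (and similarly for $w$). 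Combining the two observations, a proper $2$-coloring of $G^*(\mathrm{sd}(\D'))$ is the same data as a choice, for each facet $T$ of $\D'$, of one of its two cyclic orientations, made so that any two facets meeting in a ridge induce opposite orientations on it.

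It remains to produce such a coherent choice of orientations; this is where the hexagonal local structure of a barycentric subdivision is used, and it is the main obstacle of the argument. In the case most relevant to the preceding results — $\D'$ a \emph{planar} triangulation — one can sidestep orientations entirely and invoke \cref{eulerian}: a direct degree count shows $G(\mathrm{sd}(\D'))$ is Eulerian (the barycenter of an edge has degree $4$; the barycenter of a facet has degree $6$; and an original vertex lying in $d$ edges and $d$ facets of $\D'$ has degree $2d$ in $\mathrm{sd}(\D')$), so WLP in degree $2$ fails by \cref{eulerian}. In general one instead constructs the orientations by propagating the forced $2$-coloring across ridges and verifies consistency around cycles of $G^*(\mathrm{sd}(\D'))$ — this consistency check is the crux. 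Once it is in place, $\mathrm{sd}(\D')$ is face $2$-colorable, and \cref{psd_wo_bdy} gives that WLP in degree $2$ fails for $\A$.
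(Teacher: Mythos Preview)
Your reduction to face $2$-colorability of $\mathrm{sd}(\D')$ is correct, and your local analysis inside each facet of $\D'$ is accurate. However, there is a genuine gap: you explicitly identify the general ``consistency check around cycles'' as \emph{the crux} of the argument and then do not carry it out. Worse, your own reformulation shows exactly what this check amounts to --- a choice of cyclic orientation on each facet of $\D'$ such that facets sharing a ridge induce opposite orientations on that ridge --- and this is precisely the definition of an \emph{orientation} of the surface $\D'$. So your argument, as it stands, establishes the corollary only when $\D'$ is orientable (in which case simply invoking any orientation of $\D'$ finishes the proof at once); for non-orientable $\D'$, such as a triangulation of $\mathbb{RP}^2$, your own analysis says that no such consistent choice exists, and your method cannot conclude. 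Your treatment of the planar case via \cref{eulerian} is complete, but planar triangulations are spheres, hence orientable, so this does not go beyond the orientable case.

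The paper's proof is quite different and much shorter: rather than attempting a direct combinatorial argument, it simply cites an external result (\cite[Corollary~2]{LVZ2017}) asserting that the first barycentric subdivision of a triangulated closed surface is face $2$-colorable, and then applies \cref{psd_wo_bdy}. Your reformulation in terms of orientations is a nice structural observation, but to match the full scope of the stated corollary you would either need to supply an argument that does not pass through orientability of $\D'$, or else add the hypothesis that $\D'$ is orientable.
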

 
\begin{proof}
When $\D$ is a $2$-dimensional pseudomanifold without boundary that is the first barycentric subdivision of a triangulation, by \cite[Corollary 2]{LVZ2017}, we see that the complex is now face $2$-colorable and hence  WLP fails in degree $2$.
\end{proof}     

\begin{example}
For the first barycentric subdivision of a tetrahedron, WLP fails in degree $2$ for $\A$.   
\end{example}

We note that \cite{KE2009} has more results on Lefschetz property of barycentric subdivisions.

\section{Examples of Artinian Gorenstein algebras that fail WLP}

In this section we construct Artinian Gorenstein algebras that fail WLP by combining our results and the standard technique of Nagata idealization. We briefly recall the basic set-up that's relevant for our purpose. Let $\k$ be a field of characteristic $0$. Let $R$ be a standard graded level Artinian $\k$-algebra with socle degree $d$, that is, the socle of $R$ is $R_d$ (see \cref{soc}). Let $\omega$ be the graded canonical module of $R$, and let $\tilde R=R\ltimes \omega(-d-1)$ be the Nagata idealization of $\omega$. Then $\tilde R$ is a standard graded Artinian Gorenstein algebra with socle degree $d+1$, and $\dim_{\k} \tilde R_i = \dim_{\k} R_i+\dim_{\k} R_{d+1-i}$. For more details on this construction we refer to \cite{MSS2021}.   

\begin{proposition}\label{Gor}
Assume the set-up above with $d\geq 2$. Further assume that $\dim_\k R_2+\dim_\k R_{d-1}\geq \dim_\k R_1+\dim_\k R_d$. If $R$ fails surjectivity at degree $d-1$, then $\tilde R$ fails surjectivity, and hence fails WLP in degree $d-1$. 
\end{proposition}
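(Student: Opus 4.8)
The plan is to exploit the explicit description of the graded pieces of the idealization $\tilde R = R \ltimes \omega(-d-1)$ together with the fact that, in the relevant degree, the map "multiplication by a general linear form $\ell$" on $\tilde R$ decomposes into a block-triangular shape whose diagonal blocks are the multiplication maps on $R$ and on $\omega(-d-1)$. First I would record that $\soc \tilde R = d+1$, so to check WLP at degree $d-1$ we examine $\tilde\mu_{d-1}\colon \tilde R_{d-1} \to \tilde R_d$, and by the hypothesis $\dim_\k R_2 + \dim_\k R_{d-1} \geq \dim_\k R_1 + \dim_\k R_d$ — i.e. $\dim_\k \tilde R_{d-1} \geq \dim_\k \tilde R_d$ after using $\dim_\k \tilde R_i = \dim_\k R_i + \dim_\k R_{d+1-i}$ — the map $\tilde\mu_{d-1}$ is expected to be surjective if WLP holds. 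So it suffices to show $\tilde\mu_{d-1}$ is \emph{not} surjective, equivalently that $(\tilde R/\ell\tilde R)_d \neq 0$.

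Next I would unwind the module structure. Write elements of $\tilde R$ in degree $i$ as pairs $(r, \phi)$ with $r \in R_i$ and $\phi \in \omega_{i - d - 1}$ (nonzero only when $d+1-i$ is a valid degree of $\omega$, and recall $\omega_j \cong (R_{d-j})^\vee$ up to the shift, so the $\omega$-part of $\tilde R_i$ has dimension $\dim_\k R_{d+1-i}$). Multiplication by $\ell$ sends $(r,\phi) \mapsto (\ell r,\ \ell\phi)$, where $\ell r$ is ordinary multiplication in $R$ and $\ell\phi$ is the (contragredient) action of $R_1$ on $\omega$. Crucially, the $R$-component of the product depends only on the $R$-component of the input: there is a short exact sequence of complexes $0 \to \omega(-d-1) \to \tilde R \to R \to 0$ compatible with $\cdot\ell$. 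Therefore surjectivity of $\tilde\mu_{d-1}$ forces surjectivity of $\mu_{d-1}^R\colon R_{d-1}\to R_d$. By hypothesis $R$ fails surjectivity at degree $d-1$, so $\mu^R_{d-1}$ is not surjective, hence $\tilde\mu_{d-1}$ is not surjective either; combined with $\dim_\k \tilde R_{d-1} \geq \dim_\k \tilde R_d$ this means $\tilde\mu_{d-1}$ does not have maximal rank, so $\tilde R$ fails WLP in degree $d-1$. The final sentence "$\tilde R$ fails surjectivity" then just restates that $\tilde\mu_{d-1}$ is not onto.

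The main obstacle I anticipate is justifying cleanly the compatibility of the canonical surjection $\tilde R \twoheadrightarrow R$ with multiplication by the \emph{general} linear form — i.e. that a linear form general for $\tilde R$ restricts to a linear form general for $R$ (or at least that if some $\ell$ worked as a Lefschetz element for $\tilde R$ then its image would be a Lefschetz element for $R$), so that nonsurjectivity of $\mu^R_{d-1}$ for a general form genuinely obstructs $\tilde R$. Since the variables of $R$ form a subset of the variables of $\tilde R$ and the projection kills exactly the idealization part (which sits in top-ish degrees), a general linear form of $\tilde R$ projects to a general linear form of $R$, so this should go through; one should be slightly careful that "fails surjectivity at degree $d-1$" is a statement for the general form, matching \cref{WLP} and the monomial setup. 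A secondary, purely bookkeeping point is verifying the Hilbert-function inequality $\dim_\k\tilde R_{d-1}\ge\dim_\k\tilde R_d$ translates exactly to the stated hypothesis, which is immediate from $\dim_\k\tilde R_i=\dim_\k R_i+\dim_\k R_{d+1-i}$ with $i=d-1$ and $i=d$.
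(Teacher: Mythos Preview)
Your proposal is correct and follows essentially the same route as the paper: use the projection $\tilde R \twoheadrightarrow R$ (equivalently the block-triangular shape of $\tilde\mu_{d-1}$) to deduce that surjectivity of $\tilde R_{d-1}\to\tilde R_d$ would force surjectivity of $R_{d-1}\to R_d$, then invoke the dimension inequality $\dim_\k\tilde R_{d-1}\ge\dim_\k\tilde R_d$ to pass from non-surjectivity to failure of WLP. The only refinement is that your ``main obstacle'' about generality dissolves more cheaply than you suggest: since rank is upper-semicontinuous, failure of surjectivity for a \emph{general} form in $R$ means \emph{no} linear form of $R$ achieves surjectivity, so you need not worry about whether a general $\ell\in\tilde R_1$ restricts to a general form of $R$---its restriction, general or not, cannot be surjective. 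This is exactly how the paper dispatches the point.
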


\begin{proof}
Let $l\in \tilde R_1$ be a general linear form. If $l$ induces a surjective map $\tilde R_{d-1} \to \tilde R_d$, then the restriction of $l$ to $R_1$ must induce a surjective map from $R_{d-1} \to R_d$. But indeed no linear form can give surjective map $R_{d-1}\to R_d$ (as general linear forms have maximal possible rank). Finally, to show that failing surjectivity implies failing WLP, we need to show that $\dim_\k \tilde R_{d-1}\geq \dim_\k \tilde R_{d}$, which follows from the assumption on $R$. 
\end{proof}

\begin{corollary}\label{corGor}
Let $d\geq 2$ and $\Delta$ be a $(d-1)$-dimensional pseudomanifold without boundary such that the dual graph of $\Delta$ is bipartite. Let $R=A(\Delta)$. The idealization $\tilde R$ of $R$ is an Artinian Gorenstein ring that fails WLP in degree $d-1$. 
\end{corollary}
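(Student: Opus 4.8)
The plan is to apply \cref{Gor} to $R=A(\D)$, so the proof reduces to checking its three hypotheses for this particular $R$ and then quoting the Nagata idealization set-up for the Gorenstein conclusion. First I would record the structure of $R$: since $\D$ is $(d-1)$-dimensional, \cref{inv_A}(iv) gives $\soc(R)=\dim(\D)+1=d$, and \cref{rid-fac} says $R$ is level. Thus $R$ is a standard graded level Artinian $\k$-algebra of socle degree $d\ge 2$, which is exactly the set-up preceding \cref{Gor}; in particular $\tilde R=R\ltimes\omega(-d-1)$ is automatically a standard graded Artinian Gorenstein algebra. It then remains to verify (a) that $R$ fails surjectivity in degree $d-1$, and (b) the numerical inequality $\dim_\k R_2+\dim_\k R_{d-1}\ge\dim_\k R_1+\dim_\k R_d$.

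For (a): since $\D$ is a $(d-1)$-dimensional pseudomanifold without boundary with bipartite dual graph, \cref{psd_wo_bdy} (read with its ``$d$'' taken to be our $d-1$) says $R$ does not have WLP in degree $d-1$. The dimension count in the proof of \cref{psd_wo_bdy}, now with ridges of dimension $d-2$ each lying in exactly two facets and each facet containing $d$ ridges, gives $\dim_\k R_{d-1}=\tfrac{d}{2}\dim_\k R_d\ge\dim_\k R_d$ because $d\ge 2$. Hence $\mu_{d-1}\colon R_{d-1}\to R_d$ maps from a space whose dimension is at least that of its target, so ``maximal rank'' would mean surjective; failure of WLP in degree $d-1$ therefore means $\mu_{d-1}$ is not surjective, which is precisely failure of surjectivity at degree $d-1$.

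For (b): we already have $\dim_\k R_{d-1}\ge\dim_\k R_d$, so it suffices to show $\dim_\k R_2\ge\dim_\k R_1$, i.e.\ $f_1(\D)\ge f_0(\D)$, and here a short combinatorial argument is needed. If $d\ge 3$, every vertex $v$ of $\D$ lies in some facet $F$ by purity, and $F$ is a $(d-1)$-simplex on $d\ge 3$ vertices; choosing two vertices of $F$ distinct from $v$ shows $\deg_{G(\D)}(v)\ge 2$, so $2f_1(\D)=\sum_v\deg_{G(\D)}(v)\ge 2f_0(\D)$. If $d=2$, then $\D$ is a $1$-dimensional pseudomanifold without boundary, i.e.\ a disjoint union of cycles, so $f_1(\D)=f_0(\D)$ (and in this case $R_2=R_d$ and $R_1=R_{d-1}$, making (b) an equality). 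Either way (b) holds. With all hypotheses of \cref{Gor} verified, we conclude that $\tilde R$ fails surjectivity, hence fails WLP in degree $d-1$, and is Artinian Gorenstein as noted above.

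The only points requiring genuine care — more bookkeeping than obstacle — are the index shift between the ambient dimension $d$ of \cref{psd_wo_bdy} and the $d-1$ appearing in this statement, and the elementary inequality $f_1(\D)\ge f_0(\D)$ for a pseudomanifold without boundary; the Gorenstein conclusion and the implication ``fails surjectivity $\Rightarrow$ fails WLP'' are supplied directly by \cref{Gor} and the idealization set-up.
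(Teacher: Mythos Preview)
Your proof is correct and follows the same approach as the paper: verify the hypotheses of \cref{Gor} via \cref{rid-fac}, \cref{psd_wo_bdy}, the ridge/facet count $f_{d-2}=(d/2)f_{d-1}$, and the inequality $f_1\ge f_0$. The only differences are cosmetic: you establish $f_1\ge f_0$ by a vertex-degree count rather than the paper's ``connected $1$-skeleton, not a tree'' argument, and in the case $d=2$ note that \cref{psd_man}(iii) forces $\D$ to be a single cycle rather than a disjoint union of cycles (this slip does not affect your conclusion $f_1=f_0$).
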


\begin{proof}
$R$ is a level algebra (\cref{rid-fac}) that fails surjectivity at degree $d-1$ by \cref{psd_wo_bdy}. Thus, by applying \cref{Gor}, we just need to show that $\dim_\k R_2+\dim_\k R_{d-1}\geq \dim_\k R_1+\dim_\k R_d$.  Note that  $\dim_\k R_i=f_{i-1}(\Delta)$. Clearly $f_1 (\D) \geq f_0 (\D)$: the $1$-skeleton of $\Delta$ is connected and hence $f_1 (\D) \geq f_0 (\D)-1$ with equality if and only if it is a tree, which is impossible as $\Delta$ is a pseudo-manifold.  Equally clearly, $f_{d-2}(\D)=(d/2)f_{d-1}(\D)$: each $(d-2)$-face is in exactly $2$ facets, and each facet has exactly $d$ many $(d-2)$-faces.  Thus the desired inequality follows. 
\end{proof}

\begin{example} \label{Gor_ex}
Let $\D = \{ 12, 23, 34, 14 \}$, a $4$-cycle which is certainly a $1$-dimensional pseudomanifold without boundary, $R = \A = \k[x_1,x_2,x_3,x_4]/J_{\D}$ where $J_{\D} = \la x_1^2, x_2^2, x_3^2, x_4^2, x_1x_3, x_2x_4 \ra$, is a graded Artinian level $\k$-algebra with $\soc(\A) = 2$. By \cref{deg1} (i), since $G(\D)$ is bipartite, WLP fails in degree $1$ (this can also be observed from \cref{psd_wo_bdy}, since the dual graph of $\D$ is also a $4$-cycle which is bipartite).\\
We now refer to \cite[Lemma 3.3]{MSS2021} to compute $\tilde R$, which is a quotient of the polynomial ring $T=\k[x_1,x_2,x_3,x_4, y_1, y_2, y_3, y_4]$, where the variables $y_1, y_2, y_3, y_4$ correspond to the generators of $\omega_R$. From the presentation of $\omega_R$, we get the ideal defining the relations involving generators of $\omega_R$, $I = \la -x_4y_1, -x_4y_2,-x_3y_1,-x_2y_1 + x_4y_3, x_1y_1-x_3y_2,-x_2y_2+x_4y_4, x_1y_2, -x_3y_3,-x_2y_3,x_1y_3-x_3y_4,-x_2y_4, x_1y_4\ra$. We see that the idealization $\tilde R$ is given by $T/J'$ where $J' = J_{\D} + {\la y_1, y_2, y_3, y_4 \ra}^2 + I$. It can be checked that $\tilde R$ fails WLP in degree $d-1 = 1$, as predicted by \cref{corGor}. We note that the Hilbert function of $\tilde R$ is $(1,8,8,1)$. Further, it can be checked using Macaulay2 that $J'$ has a Gr\"obner basis consisting of quadrics, and thus, $\tilde R$ is Koszul (see \cite[Proposition 2.12]{DV2021}).
\end{example}

\cref{Gor_ex} can be generalized as follows: Consider $\D$ to be any even cycle on $n=2a$ vertices for $a \ge 2$ and $R = \A= \k[x_1, \ldots, x_n]/J_{\D}$ (this ensures that $G(\D)$ is bipartite and hence, $R$ fails WLP in degree $1$). In the polynomial ring $T = \k[x_1, \ldots, x_n, y_1, \ldots, y_n]$, where $y_1, \ldots, y_n$ correspond to the generators of $\omega_R$, define ideal $I \subseteq T$ using the relations involving $\{y_1, \ldots, y_n\}$. To get a Gorenstein algebra $\tilde R$ in $2n$ variables, let $\tilde R = T/J'$ where $J' = J_{\D} + {\la y_1, \ldots, y_n \ra}^2 + I$. The Hilbert function of $\tilde R$ is $(1, 2n, 2n, 1)$. This $\tilde R$ fails WLP in degree $1$. This way, we get a family of Aritinian Gorenstein algebras in $2n$ variables that fail WLP in degree $1$ and having unimodal Hilbert function $(1,2n,2n,1)$, for $n \ge 4$.\\

We note that the resulting Gorenstein algebra constructed  in this section bear resemblance to the ones in \cite{GZ2018}, albeit from a rather different point of view. See also \cite[Section 8]{DV2021} for a detailed discussion of and some corrections to the results of \cite{GZ2018}.

\end{document}